\newtheorem{theorem}{Theorem}[section]
\newtheorem{definition}[theorem]{Definition}
\newtheorem{proposition}[theorem]{Proposition}
\newtheorem{lemma}[theorem]{Lemma}
\begin{document}

\title[Cyclotomic expansion]{Cyclotomic expansion of exceptional spectral measures}

\author{Teodor Banica}
\address{Department of Mathematics, Toulouse 3 University, 118 route de Narbonne, 31062 Toulouse, France. {\tt banica@math.ups-tlse.fr}}

\subjclass[2000]{46L37} 
\keywords{Spectral measure, ADE graph}
\thanks{Work supported by the ANR grant 07-BLAN-0229}

\begin{abstract}
We find explicit formulae for the circular spectral measures of $E_7,E_8$. This leads to a number of general observations regarding the ADE circular measures: these are linear combinations of measures supported by the roots of unity, with real density given by certain degree 3 polynomials.
\end{abstract}

\maketitle

\section*{Introduction}

One of the main problems in quantum groups or subfactors is the computation of a certain real probability measure $\mu$. In the case of a compact group $G\subset U_n$, this is the spectral measure of the character of the fundamental representation with respect to the Haar integration, with moments given by:
$$c_k=\int_G {\rm Tr}(g)^k\,dg$$

In the case of the dual of a discrete group $\Gamma=<g_1,\ldots,g_n>$, this is the Kesten measure of the group, whose moments are given by:
$$c_k=\#\{i_1,\ldots,i_k\mid g_{i_1}\ldots g_{i_k}=1\}$$

Observe that, by standard Fourier analysis, the above two formulae coincide indeed in the case of a pair $(G,\Gamma)$ of dual abelian groups.

In the general case of a compact quantum group $G={\rm Spec}(A)$, the numbers $c_k$ and the measure $\mu$ can be defined by similar formulae.

An even more general situation is that of a subfactor $N\subset M$. The symmetries of $N\subset M$ are encoded by a group-like object $G$, and the Cayley graph of $\widehat{G}$ is a certain rooted bipartite graph $(X,*)$, called principal graph. The numbers $c_k$ can be recovered by counting the lenght $2k$ loops based at the root:
$$c_k={\rm loop}_X(2k)$$

As for the measure $\mu$ itself, this is nothing but the spectral measure of the adjacency matrix of $X$, with respect to linear form $a\to a_{**}$.

In general, the computation of $\mu$ is a quite delicate problem, and complete results are available only for a few groups, group duals, quantum groups or subfactors. For instance in the group case  the computation of $\mu$ requires a good knowledge of the representation theory of $G$. As for the group dual case, the difficulty in computing the Kesten measure is also well-known.

A number of new ideas have appeared in the recent years in connection with the above problem, formulated for quantum groups or subfactors. In all cases, the change of variables $\Phi(q)=(q+q^{-1})^2$ in the complex plane seems to play a key role. The story of this change of variables is as follows:

\begin{enumerate}
\item In \cite{gle} Graham and Lehrer classified the representations of the affine Temperley-Lieb algebra. In \cite{jo2} Jones used a version of their result, in order to describe the irreducible Temperley-Lieb planar modules. As a corollary, in the $[M:N]>4$ case the theta series (obtained from the Stieltjes transform of $\mu$ via the above change of variables) has the remarkable property that all its coefficients are positive numbers.

\item In the $[M:N]\leq 4$ case, corresponding to the ADE principal graphs, a systematic study of the theta series was done by Reznikoff in \cite{rez}. A direct probabilistic approach to the computation and interpretation of the theta series was taken in our joint paper with Bisch \cite{bdb}, with the main result that the measure $\varepsilon=\Phi_*\mu$ is given by very simple formulae.

\item The change of variables $\Phi$ appears as well in the context of Wang's quantum permutation groups \cite{wan}. In our joint paper with Bichon \cite{bjb}, we describe the subgroups of $S_4^+$, and we find an ADE classification result for them. The correspondence, which is much less explicit than McKay's correspondence \cite{mck}, is in fact obtained by using circular measures $\varepsilon$.

\item Now back to subfactors, some exciting new results in the $[M:N]>4$ case come from the general observations of Coste and Gannon in \cite{cga}, via the obstruction formulated by Etingof, Nikshych and Ostrik in \cite{eno}. The applications, due to Asaeda \cite{asa} and Asaeda and Yasuda \cite{aya}, are based on a number of concrete computations, where the change of variables $\Phi$ is present as well, at least in some hidden form.

\item As for the recent trends in compact quantum groups, once again we seem to get into $\Phi$. In a joint paper with Collins \cite{bco} we worked out the Weingarten formula for $O_n^+$, in terms of Di Francesco's meander determinants \cite{dif}. The computation of the law of $u_{11}$, left open at that time, can be in fact solved, by computing the associated circular measure $\varepsilon$. This will be discussed in a forthcoming paper with Collins and Zinn-Justin \cite{bcz}.
\end{enumerate}

Summarizing, the change of variables $\Phi(q)=(q+q^{-1})^2$ and the circular spectral measure $\varepsilon=\Phi_*\mu$ seem to play an increasing role in a number of recent considerations. This confirms the hopes and expectations formulated in \cite{bdb}.

The purpose of this paper is to go back to the investigations in \cite{bdb}, with the will of systematically improving the material there. We have two results here: (1) an explicit formula for the measures of $E_7,E_8$, and (2) a number of general observations regarding the class of measures associated to the ADE graphs.

These results, complementary to those in \cite{bdb}, are expected to be of help in connection with the above-mentioned directions of research.

The paper is organized as follows: 1-3 are preliminary sections, in 4-6 we discuss the notion of cyclotomic measure, and in 7-8 we study the measures associated to the ADE graphs. The final section, 9, contains a few concluding remarks.

\section{Principal graphs}

A subfactor is an inclusion of ${\rm II}_1$ factors $N\subset M$. The index of the subfactor is the number $\lambda=\dim_N(M)$. The dimension is taken in the Murray-von Neumann sense, and we have $\lambda\in[1,\infty]$. In what follows we assume $\lambda<\infty$.

The basic construction associates to $N\subset M$ a certain subfactor $M\subset M_2$, by a kind of general mirroring procedure. By iterating the basic construction we obtain the Jones tower $N\subset M\subset M_2\subset M_3\subset\ldots$. See \cite{jo1}.

The higher relative commutants of the subfactor are the algebras $P_k=N'\cap M_k$. Since the index is finite, we have $\dim(P_k)<\infty$. The system of algebras $P=(P_k)$ carries a rich combinatorial structure, called planar algebra \cite{jo2}. In the amenable case, $P$ is a complete invariant for the subfactor \cite{pop}.

The principal graph $X$ of the subfactor is obtained by taking the Bratelli diagram of the system of inclusions $P_0\subset P_1\subset P_2\subset\ldots$, then by deleting the reflections coming from basic constructions. See \cite{jo1}.

The main properties of $X$ can be summarized as follows.

\begin{proposition}
The principal graph $X$ has the following properties:
\begin{enumerate}
\item In the amenable case, the index of $N\subset M$ is given by $\lambda=||X||^2$.

\item The higher relative commutant $P_k=N'\cap M_k$ is isomorphic to the abstract vector space spanned by the $2k$-loops on $X$ based at the root.
\end{enumerate} 
\end{proposition}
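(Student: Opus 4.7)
The plan is to handle (2) first, as a purely combinatorial/algebraic statement about the Bratteli diagram, and then deduce (1) by combining (2) with a Perron--Frobenius spectral computation and with the input supplied by amenability.

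For (2), I would unwind the construction of $X$. The Bratteli diagram $B$ of the tower $P_0\subset P_1\subset P_2\subset\ldots$ encodes the multi-matrix structure of each $P_k$: vertices at level $k$ label the simple summands, and edge multiplicities between adjacent levels record the inclusion multiplicities. Thus $P_k\cong\bigoplus_v M_{n_v(k)}(\mathbb{C})$, where $n_v(k)$ counts the paths from the root of $B$ to $v$, so that
$$\dim P_k=\sum_v n_v(k)^2=\#\{(p,q):p,q\text{ are paths of length }k\text{ from the root to a common vertex}\}.$$
Concatenating $p$ with the reverse of $q$ gives a bijection between such pairs and length-$2k$ loops based at the root of $B$. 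Since the passage $B\rightsquigarrow X$ simply removes the reflected copies added by the basic construction, this folding is loop-preserving, i.e.\ loops in $B$ at the root are in bijection with loops in $X$ at the root. A natural choice of basis for $P_k$ (matrix units indexed by pairs of paths) then identifies $P_k$ with the span of the $2k$-loops at $*$ in $X$.

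For (1), by (2) we have
$$\dim P_k=\mathrm{loop}_X(2k)=(M^{2k})_{**},$$
where $M$ is the adjacency matrix of $X$. Amenability of $N\subset M$ is used in two complementary ways. On the graph side, it ensures that the Dirac vector at $*$ has non-trivial component along the Perron--Frobenius eigenspace of $M$, which yields $\lim_{k\to\infty}(M^{2k})_{**}^{1/k}=\|X\|^2$. On the subfactor side, amenability (via Popa's theorem) forces the tower dimensions to grow at the maximal rate $\lim_k\dim(P_k)^{1/k}=\lambda$. Comparing the two expressions through (2) gives $\lambda=\|X\|^2$.

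The main obstacle is precisely this amenability step in (1): for a general subfactor the dimensional growth rate of $(P_k)$ can drop strictly below $\lambda$, and one genuinely needs either Popa's classification or an explicit equivalence of amenability with the Kesten-type condition $\lim_k\dim(P_k)^{1/k}=\lambda$ to rule this out; the finiteness assumption $\lambda<\infty$ further guarantees that $X$ is locally finite so that the Perron--Frobenius analysis applies in the first place. Part (2) and the Perron--Frobenius half of (1) are, by contrast, routine combinatorial bookkeeping.
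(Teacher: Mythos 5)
The paper does not actually prove this proposition: it is stated as background, with the content attributed to the literature (\cite{jo1}, \cite{ghj}, \cite{pop}), so there is no in-paper argument to compare against. Your sketch is the standard one and is correct in outline. For (2), the path-pair count $\dim P_k=\sum_v n_v(k)^2=\#\{\text{loops of length }2k\text{ at }*\}$ is right, and identifying paths in the Bratteli diagram with paths in $X$ is legitimate \emph{given} the paper's definition of the principal graph as the Bratteli diagram with the basic-construction reflections deleted; just be aware that the real theorem hiding here is that the Bratteli diagram has this reflection structure in the first place (i.e.\ that $N'\cap M_{k+1}$ contains the basic construction of $N'\cap M_{k-1}\subset N'\cap M_k$), which is where the Jones projections are genuinely used.

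For (1), your framing is slightly more roundabout than necessary and has one soft spot. The unconditional facts are: $\lim_k \mathrm{loop}_X(2k)^{1/k}=\|X\|^2$ (for an infinite locally finite graph this should be deduced from $\langle M^{2k}\delta_*,\delta_*\rangle^{1/k}\to\|M\|^2$ via the spectral theorem, not from a Perron--Frobenius eigenvector, which need not exist in $\ell^2$), and Jones' inequality $\|X\|^2\leq\lambda$. Amenability is then, in Popa's formulation, precisely the equality case $\|X\|^2=\lambda$ --- either by definition or as a nontrivial equivalence with other characterizations \cite{pop}. So your ``two complementary ways'' really collapse to quoting that definition/equivalence; phrased as you have it (``amenability forces $\lim_k\dim(P_k)^{1/k}=\lambda$''), the statement is circular unless you specify which definition of amenability you start from. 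This is a presentational rather than a mathematical gap, and for a background proposition of this kind your level of detail already exceeds the paper's.
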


It follows from this result that the principal graph of a subfactor having index $\lambda\leq 4$ must be one of the Coxeter-Dyknin graphs of type ADE. See \cite{ghj}.

The usual Coxeter-Dynkin graphs are as follows:
$$A_n=\bullet-\circ-\circ\cdots\circ-\circ-\circ\hskip20mm A_{\infty}=\bullet-\circ-\circ-\circ\cdots\hskip7mm$$
\vskip-7mm
$$D_n=\bullet-\circ-\circ\dots\circ-
\begin{matrix}\ \circ\cr\ |\cr\ \circ \cr\ \cr\  \end{matrix}-\circ\hskip70mm$$
\vskip-7mm
$$\ \ \ \ \ \ \ \tilde{A}_{2n}=
\begin{matrix}
\circ&\!\!\!\!-\circ-\circ\cdots\circ-\circ-&\!\!\!\!\circ\cr
|&&\!\!\!\!|\cr
\bullet&\!\!\!\!-\circ-\circ-\circ-\circ-&\!\!\!\!\circ\cr\cr\cr\end{matrix}\hskip15mm A_{-\infty,\infty}=
\begin{matrix}
\circ&\!\!\!\!-\circ-\circ-\circ\cdots\cr
|&\cr
\bullet&\!\!\!\!-\circ-\circ-\circ\cdots\cr\cr\cr\end{matrix}
\hskip15mm$$
\vskip-9mm
$$\;\tilde{D}_n=\bullet-
\begin{matrix}\circ\cr|\cr\circ\cr\ \cr\ \end{matrix}-\circ\dots\circ-
\begin{matrix}\ \circ\cr\ |\cr\ \circ \cr\ \cr\  \end{matrix}-\circ \hskip20mm D_\infty=\bullet-
\begin{matrix}\circ\cr|\cr\circ\cr\ \cr\ \end{matrix}-\circ-\circ\cdots\hskip7mm$$
\vskip-7mm

Here the graphs $A_n$ with $n\geq 2$ and $D_n$ with $n\geq 3$ have $n$ vertices each, $\tilde{A}_{2n}$ with $n\geq 1$ has $2n$ vertices, and $\tilde{D}_n$ with $n\geq 4$ has $n+1$ vertices.

The first graphs in each series are as follows:
$$A_2=\bullet-\circ\hskip10mm 
D_3=\begin{matrix}\ \circ\cr\ |\cr\ \bullet \cr\ \cr\  \end{matrix}-\circ \hskip10mm
\tilde{A}_2=\begin{matrix}
\circ\cr
||\cr
\bullet\cr&\cr&\cr\end{matrix}\hskip10mm 
\tilde{D}_4=\bullet-\!\!\!\!\!\begin{matrix}
\circ\hskip5mm \circ\cr
\backslash\ \,\slash\cr
\circ\cr&\cr&\cr\end{matrix}\!\!\!\!\!\!\!\!\!\!-\circ$$
\vskip-7mm

The exceptional Coxeter-Dynkin graphs are as follows:
$$E_6=\bullet-\circ-
\begin{matrix}\circ\cr|\cr\circ\cr\ \cr\ \end{matrix}-
\circ-\circ\hskip71mm$$
\vskip-13mm
$$E_7=\bullet-\circ-\circ-
\begin{matrix}\circ\cr|\cr\circ\cr\ \cr\ \end{matrix}-
\circ-\circ\hskip18mm$$
\vskip-15mm
$$\hskip30mm E_8=\bullet-\circ-\circ-\circ-
\begin{matrix}\circ\cr|\cr\circ\cr\ \cr\ \end{matrix}-
\circ-\circ$$
\vskip-19mm

$$\tilde{E}_6=\bullet-\circ-\begin{matrix}
\circ\cr|
\cr\circ\cr|&\cr\circ&\!\!\!\!-\ \circ\cr\ \cr\   \cr\ \cr\ \end{matrix}-\circ\hskip71mm$$
\vskip-22mm
$$\tilde{E}_7=\bullet-\circ-\circ-
\begin{matrix}\circ\cr|\cr\circ\cr\ \cr\ \end{matrix}-
\circ-\circ-\circ\hskip18mm$$
\vskip-15mm
$$\hskip30mm \tilde{E}_8=\bullet-\circ-\circ-\circ-\circ-
\begin{matrix}\circ\cr|\cr\circ\cr\ \cr\ \end{matrix}-
\circ-\circ$$
\vskip-5mm

The subfactors of index $\leq 4$ were intensively studied in the 80's and early 90's. About 10 years after the appearance of Jones' founding paper \cite{jo1}, a complete classification result was found, with contributions by many authors. 

A simplified form of this classification result is as follows.

\begin{theorem}
The principal graphs of subfactors of index $\leq 4$ are:
\begin{enumerate}
\item Index $<4$ graphs: $A_n$, $D_{even}$, $E_6$, $E_8$. 
\item Index $4$ finite graphs: $\tilde{A}_{2n}$, $\tilde{D}_n$, $\tilde{E}_6$, $\tilde{E}_7$, $\tilde{E}_8$.
\item Index $4$ infinite graphs: $A_\infty$, $A_{-\infty,\infty}$, $D_\infty$.
\end{enumerate}
\end{theorem}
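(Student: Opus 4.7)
My plan is to reduce the classification to a well-known graph-theoretic problem via Proposition 1.1(1), then to use flat-connection obstructions to eliminate the candidates that do not actually arise, and finally to exhibit a subfactor realizing each surviving graph.

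First, by Proposition 1.1(1) the index condition $\lambda \leq 4$ is equivalent to the spectral bound $\|X\| \leq 2$, and $X$ is by definition connected, bipartite and rooted. I then invoke Smith's theorem on connected graphs of small norm: the finite connected graphs with $\|X\| < 2$ are exactly the Coxeter-Dynkin graphs $A_n$, $D_n$ ($n \geq 3$), $E_6$, $E_7$, $E_8$; the finite connected graphs with $\|X\| = 2$ are the extended Dynkin graphs $\tilde{A}_m$, $\tilde{D}_n$, $\tilde{E}_6$, $\tilde{E}_7$, $\tilde{E}_8$; and the connected locally finite infinite graphs with $\|X\| = 2$ are $A_\infty$, $A_{-\infty,\infty}$, $D_\infty$. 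Imposing bipartiteness removes the odd cycles $\tilde{A}_{2n+1}$ and leaves $\tilde{A}_{2n}$.

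Next comes the genuinely hard step. The principal graph must carry a flat biunitary connection (Ocneanu's paragroup axiomatization of the standard invariant), and a cohomological analysis of the connection on the Dynkin candidates rules out $D_{odd}$ ($n \geq 5$) and $E_7$ from the index-$<4$ list, leaving precisely $A_n$, $D_{even}$, $E_6$, $E_8$ as in part (1). At index $4$, by contrast, all the remaining extended Dynkin diagrams do admit flat connections, because they correspond via the McKay correspondence to irreducible representations of finite subgroups $\Gamma \subset SU(2)$, and the associated fixed-point subfactors realize them.

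Finally, each surviving graph must actually be produced. The Jones tower yields $A_n$ at index $4\cos^2(\pi/(n+1))$; the Goodman--de la Harpe--Jones string-algebra construction (equivalently, quotients of Hecke or Temperley--Lieb algebras at roots of unity) realizes $D_{even}$, $E_6$, $E_8$; crossed-product/orbifold constructions $R \subset R \rtimes \Gamma$ with $\Gamma \subset SU(2)$ produce the extended ADE family; and infinite-depth variants realize $A_\infty$, $A_{-\infty,\infty}$, $D_\infty$. The main obstacle throughout is the flatness obstruction of the second step: Smith's theorem is elementary and existence is by explicit construction, but excluding $D_{odd}$ and $E_7$ requires a delicate analysis of candidate biunitary connections on these specific graphs, and this is where the real substance of the classification resides.
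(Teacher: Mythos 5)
The paper does not prove this theorem at all: it presents it as a known classification ``with contributions by many authors'' and simply refers the reader to the literature (Evans--Kawahigashi \cite{eka}), so there is no internal argument to compare yours against. Your outline is the correct standard architecture of that classification --- reduction to $\|X\|\leq 2$ via $\|X\|^2\leq\lambda$, Smith's enumeration of graphs of norm $\leq 2$ plus bipartiteness, Ocneanu's flatness obstruction to kill $D_{odd}$ and $E_7$, and explicit realizations of everything else --- and you correctly locate the substance in the flatness step. Two caveats. First, what you have written is a roadmap, not a proof: the two load-bearing steps, namely the non-existence of flat connections on $D_{odd}$ and $E_7$ and, symmetrically, the verification that the Goodman--de la Harpe--Jones connections on $D_{even}$, $E_6$, $E_8$ \emph{are} flat, are each substantial theorems (Ocneanu, Izumi, Kawahigashi) that you invoke by name without any of the connection computations; at the level of detail given, your ``proof'' has the same epistemic status as the paper's citation. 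Second, a small slip: Proposition 1.1(1) as stated gives $\lambda=\|X\|^2$ only in the amenable case, whereas the direction you actually need for the reduction is the general inequality $\|X\|^2\leq\lambda$, which holds for any finite-index subfactor; you should quote that inequality rather than the equality. With those provisos, your sketch is a faithful summary of how the classification is actually proved.
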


Observe that the graphs $D_{odd}$ and $E_7$ don't appear in the above list. This is one of the subtle points of subfactor theory. See \cite{eka}.

\section{Analytic functions}

The Poincar\'e and theta series of a subfactor were introduced by Jones, mainly in order to deal with the case where the index is $>4$. See \cite{jo2}, \cite{jre}, \cite{rez}.

For the purposes of this paper, it is convenient to associate such series to any rooted bipartite graph.

\begin{definition}
The Poincar\'e series of a rooted bipartite graph $X$ is
$$f(z)=\sum_{k=0}^\infty{\rm loop}_X(2k)z^k$$
where ${\rm loop}_X(2k)$ is the number of $2k$-loops based at the root.
\end{definition}

In case $X$ is the principal graph of a subfactor $N\subset M$, Proposition 1.1 shows that $f$ is the Poincar\'e series of the subfactor, in the usual sense:
$$f(z)=\sum_{k=0}^\infty\dim(N'\cap M_k)z^k$$

The Poincar\'e series should be thought of as being a basic representation theory invariant of the underlying group-like object. For instance for the subfactor associated to a compact Lie group $G\subset U_n$, the Poincar\'e series is:
$$f(z)=\int_G\frac{1}{1-{\rm Tr}(g)z}\,dg$$

The theta series can introduced as a version of the Poincar\'e series, via the change of variables $z^{-1/2}=q^{1/2}+q^{-1/2}$.

\begin{definition}
The theta series of a rooted bipartite graph $X$ is
$$\Theta(q)=q+\frac{1-q}{1+q}f\left(\frac{q}{(1+q)^2}\right)$$
where $f$ is the Poincar\'e series.
\end{definition}

The theta series can be written as $\Theta(q)=\sum a_rq^r$, and it follows from the above formula that its coefficients are integers: $a_r\in\mathbb Z$.

In fact, we have the following explicit formula from \cite{jo2}, relating the coefficients of $\Theta(q)=\sum a_rq^r$ to those of $f(z)=\sum c_kz^k$:
$$a_r=\sum_{k=0}^r(-1)^{r-k}\frac{2r}{r+k}\begin{pmatrix}r+k\cr r-k\end{pmatrix}c_k$$

In case the theta series is that of a subfactor $N\subset M$ of index $\lambda>4$, it is known from \cite{jo2} that the numbers $a_r$ are in fact certain multiplicities associated to the planar algebra inclusion $TL_\lambda\subset P$. In particular, we have $a_r\in\mathbb N$.

In this paper we use a version of the theta series.

\begin{definition}
The $T$ series of a rooted bipartite graph $X$ is
$$T(q)=\frac{\Theta(q)-q}{1-q}$$
where $\Theta$ is the theta series.
\end{definition}

This normalization is there in order for the series to be additive with respect to a certain underlying measure $\varepsilon$. This will be explained later on.

In this paper we will be mainly concerned with the $T$ series of ADE graphs. These graphs have index $\leq 4$, and the planar algebra interpretation in \cite{jo2} doesn't work. The relevant combinatorics will be that of the roots of unity.

\begin{definition}
The series of the form
$$\xi(n_1,\ldots,n_s:m_1,\ldots,m_t)=\frac{(1-q^{n_1})\ldots(1-q^{n_s})}{(1-q^{m_1})\ldots(1-q^{m_t})}$$
with $n_i,m_i\in\mathbb N$ are called cyclotomic.
\end{definition}

It is convenient to allow as well $1+q^n$ factors, to be designated by $n^+$ symbols in the above writing. For instance we have $\xi(2^+:3)=\xi(4:2,3)$.

We use the notations $\xi'=\xi/(1-q)$ and $\xi''=\xi/(1-q^2)$. 

Finally, when one of the sets of indices is missing, we simply omit it. For instance, we have by definition $\xi(3:)=1-q^3$ and $\xi(:3)=1/(1-q^3)$.

The Poincar\'e series of the ADE graphs are given by quite complicated formulae. However, the corresponding $T$ series are all cyclotomic.

\begin{theorem}
The $T$ series of the ADE graphs are as follows:
\begin{enumerate}
\item For $A_{n-1}$ we have $T=\xi(n-1:n)$.

\item For $D_{n+1}$ we have $T=\xi(n-1^+:n^+)$.

\item For $\tilde{A}_{2n}$ we have $T=\xi'(n^+:n)$.

\item For $\tilde{D}_{n+2}$ we have $T=\xi''(n+1^+:n)$.

\item For $E_6$ we have $T=\xi(8:3,6^+)$.

\item For $E_7$ we have $T=\xi(12:4,9^+)$.

\item For $E_8$ we have $T=\xi(5^+,9^+:15^+)$.

\item For $\tilde{E}_6$ we have $T=\xi(6^+:3,4)$.

\item For $\tilde{E}_7$ we have $T=\xi(9^+:4,6)$.

\item For $\tilde{E}_8$ we have $T=\xi(15^+:6,10)$.
\end{enumerate}
\end{theorem}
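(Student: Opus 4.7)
\smallskip\noindent\textbf{Proof plan.} Combining Definitions 2.2 and 2.4, the $T$-series is cleanly related to the Poincar\'e series by
$$T(q)=\frac{1}{1+q}\,f\!\left(\frac{q}{(1+q)^2}\right),$$
so the task reduces to computing $f$ for each graph and then performing this substitution. For the finite graphs I would use the spectral formula
$$f(z)=\sum_i\frac{\psi_i(*)^2}{1-\lambda_i^2\,z},$$
where $(\lambda_i,\psi_i)$ ranges over an orthonormal eigenbasis of the adjacency matrix and $*$ is the root; for the three infinite graphs $A_\infty$, $A_{-\infty,\infty}$, $D_\infty$ a direct Catalan-style enumeration of loops at the root is more convenient.

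The key algebraic input is the identity
$$1-\lambda^2\,\frac{q}{(1+q)^2}=\frac{(1-q\alpha^2)(1-q\alpha^{-2})}{(1+q)^2}\qquad(\lambda=\alpha+\alpha^{-1}),$$
which turns the substitution into factors of the form $1-q\alpha^{\pm 2}$. For any ADE graph the eigenvalues are $\lambda_j=2\cos(\pi e_j/h)$, with $h$ the Coxeter number and the $e_j$ the exponents, so each $\alpha_j^{\pm 2}$ is a $2h$-th root of unity. Consequently
$$T(q)=(1+q)\sum_j\frac{\psi_j(*)^2}{(1-q\alpha_j^2)(1-q\alpha_j^{-2})},$$
whose denominator is automatically a product of cyclotomic polynomials dividing $1-q^{2h}$; this is exactly the expected shape for a cyclotomic series.

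For the classical families the root-entries of the eigenvectors are explicit sines at rational multiples of $\pi$, and the partial-fraction sum telescopes by a standard trigonometric identity, matching $\xi(n-1:n)$, $\xi(n-1^+:n^+)$, $\xi'(n^+:n)$, $\xi''(n+1^+:n)$ in cases (1)--(4). For $E_6$, $\tilde E_6$, $\tilde E_7$, $\tilde E_8$ one plugs the known Perron-type spectral data directly into the formula and simplifies. The \emph{main obstacle} is the two new cases $E_7$ and $E_8$: their exponents $\{1,5,7,9,11,13,17\}$ and $\{1,7,11,13,17,19,23,29\}$ yield a rather asymmetric partial-fraction sum, and the real content of the paper is to verify that it collapses to $\xi(12:4,9^+)$ and $\xi(5^+,9^+:15^+)$ respectively. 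A useful sanity check at each stage is to expand both sides as a power series in $q$ and compare a few leading coefficients against direct loop counts $\mathrm{loop}_X(2k)$, which already determines the unique rational function of bounded numerator/denominator degree on each side.
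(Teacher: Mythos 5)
Your plan is correct in all its stated identities (I checked $T(q)=\frac{1}{1+q}f(\frac{q}{(1+q)^2})$, the spectral formula for $f$, and the factorization $1-\lambda^2\frac{q}{(1+q)^2}=\frac{(1-q\alpha^2)(1-q\alpha^{-2})}{(1+q)^2}$ for $\lambda=\alpha+\alpha^{-1}$; e.g.\ for $A_2$ both routes give $\frac{1-q^2}{1-q^3}$), but it takes a genuinely different route from the one the paper relies on. The paper does not prove the theorem in situ: it cites \cite{bdb}, where $f$ is obtained by \emph{counting loops} (for a finite tree this is a mechanical pruning/continued-fraction recursion at the root), then the change of variables $z^{-1/2}=q^{1/2}+q^{-1/2}$ is applied and the result factorized; an alternative planar-algebra proof is attributed to \cite{rez}. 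You instead diagonalize the adjacency matrix and convert each spectral term via your algebraic identity. What your route buys is a conceptual explanation, absent from the loop-counting approach, of \emph{why} the answer is cyclotomic: every $\alpha_j^{\pm2}$ is a root of unity, so the denominator divides $1-q^{2h}$ a priori. What it costs is that you need the squared root-entries $\psi_j(*)^2$ of all eigenvectors, which for $E_7,E_8$ is a nontrivial extra computation beyond the loop counts the paper's method uses; and for the affine graphs $\tilde{A},\tilde{D},\tilde{E}$ (which the theorem also covers) the clause ``eigenvalues are $2\cos(\pi e_j/h)$ with $h$ the Coxeter number'' must be replaced by the general fact that any graph of norm $\leq2$ has spectrum of the form $2\cos(\pi k/m)$, with the eigenvalue $2$ producing the double pole at $q=1$ visible in $\xi'(n^+:n)$. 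Your closing observation--that both sides are rational functions of bounded degree, so matching finitely many coefficients against direct loop counts constitutes a complete proof--is sound and is effectively the rigorous backbone of either approach; neither your text nor the paper actually carries out the $E_7,E_8$ computations, so as written both are verifiable outlines rather than finished proofs.
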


These formulae are obtained in \cite{bdb}, by counting loops, then by making the change of variables $z^{-1/2}=q^{1/2}+q^{-1/2}$, and factorizing the resulting series.

An alternative proof for these formulae can be obtained by using planar algebra methods, see \cite{rez}. Some related computations appear as well in \cite{msm}.

\section{Circular measures}

We discuss here the measure-theoretic interpretation of the invariants discussed in the previous section. Once again, we start with an arbitrary rooted bipartite graph $X$. We introduce first a measure $\mu$, whose Stieltjes transform is $f$.

\begin{definition}
The real measure $\mu$ of a rooted bipartite graph $X$ is given by
$$f(z)=\int_0^\infty\frac{1}{1-xz}\,d\mu(x)$$
where $f$ is the Poincar\'e series.
\end{definition}

We have the following interpretation. Let $M$ be the adjacency matrix of the graph, and consider the square matrix $L=MM^t$. Also, let $<A>$ be the $(*,*)$-entry of a matrix $A$, where $*$ is the root. With these notations, we have:
\begin{eqnarray*}
f(z)
&=&\sum_{k=0}^\infty{\rm loop}_X(2k)z^k\\
&=&\sum_{k=0}^\infty\left<L^k\right>z^k\\
&=&\left<\frac{1}{1-Lz}\right>
\end{eqnarray*}

This shows that $\mu$ is in fact a spectral measure: $\mu={\rm law}(L)$.

In the subfactor case some interpretations are available as well. For instance in the situation coming from of a compact group $G\subset U_n$, discussed in the previous section, $\mu$ is the spectral measure of the character $g\to{\rm Tr}(g)$.

\begin{definition}
The circular measure $\varepsilon$ of a rooted bipartite graph $X$ is given by
$$d\varepsilon(q)=d\mu((q+q^{-1})^2)$$
where $\mu$ is the real measure.
\end{definition}

In other words, $\varepsilon$ is the pullback of $\mu$ via the map $\mathbb R\cup\mathbb T\to\mathbb R_+$ given by $q\to (q+q^{-1})^2$, where $\mathbb T$ is the unit circle. See \cite{bdb} for details.

As an example, assume that $\mu$ is a discrete measure, supported by $n$ positive numbers $x_1<\ldots<x_n$, with corresponding densities $p_1,\ldots,p_n$:
$$\mu=\sum_{i=1}^n p_i\delta_{x_i}$$

For each $i\in\{1,\ldots,n\}$ the equation $(q+q^{-1})^2=x_i$ has four solutions, that we can denote $q_i,q_i^{-1},-q_i,-q_i^{-1}$. With this notation, we have:
$$\varepsilon=\frac{1}{4}\sum_{i=1}^np_i(\delta_{q_i}+\delta_{q_i^{-1}}+\delta_{-q_i}+\delta_{-q_i^{-1}})$$

The basic properties of $\varepsilon$ can be summarized as follows.

\begin{proposition}
The circular measure has the following properties:
\begin{enumerate}
\item $\varepsilon$ has equal density at $q,q^{-1},-q,-q^{-1}$.

\item The odd moments of $\varepsilon$ are $0$.

\item The even moments of $\varepsilon$ are half-integers.

\item When $X$ has norm $\leq 2$, $\varepsilon$ is supported by the unit circle.

\item When $X$ is finite, $\varepsilon$ is discrete.

\item If $K$ is a solution of $L=(K+K^{-1})^2$, then $\varepsilon={\rm law}(K)$. 
\end{enumerate}
\end{proposition}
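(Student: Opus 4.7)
The plan is to deduce (1)--(5) essentially from the definition of $\varepsilon$ as the symmetric pullback of $\mu$ along $\phi(q)=(q+q^{-1})^2$ and from the spectral interpretation $\mu={\rm law}(L)$ with $L=MM^t$, and to reserve real work for (6).

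For (1)--(2): the rational function $\phi$ is invariant under the Klein four-group generated by $q\mapsto q^{-1}$ and $q\mapsto -q$, and the pullback convention made explicit in the discrete example immediately preceding the proposition distributes each atom of $\mu$ equally over its four $\phi$-preimages. So $\varepsilon$ is invariant under $q\mapsto -q$, which forces every odd monomial to integrate to zero.

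For (3), invariance of $\varepsilon$ under $q\mapsto q^{-1}$ gives
\[
\int q^{2k}\,d\varepsilon(q)=\tfrac{1}{2}\int\bigl(q^{2k}+q^{-2k}\bigr)\,d\varepsilon(q),
\]
and a short induction from $q^2+q^{-2}=(q+q^{-1})^2-2$ yields $q^{2k}+q^{-2k}=P_k\bigl((q+q^{-1})^2\bigr)$ for some $P_k\in\mathbb{Z}[x]$. The right-hand side is thus $\tfrac{1}{2}\int P_k(x)\,d\mu(x)$, an integer combination of the integer moments $c_j={\rm loop}_X(2j)$, divided by $2$. Parts (4) and (5) are then spectral: $\|X\|\leq 2$ puts the spectrum of $L$ in $[0,4]$, and for $x\in[0,4]$ the quadratic $q^2-aq+1=0$ with $a=\pm\sqrt{x}\in[-2,2]$ has unimodular roots, so all preimages lie on $\mathbb{T}$; finiteness of $X$ makes $L$ a finite matrix with discrete spectral measure, and the pullback inherits discreteness.

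The main obstacle is (6), which I would settle by moment comparison using functional calculus. If $K$ solves $L=(K+K^{-1})^2$ in an algebra carrying a state that restricts to $\mu$ on polynomials in $L$, then $K^{2k}+K^{-2k}=P_k(L)$ by the same Chebyshev identity, so $\langle K^{2k}+K^{-2k}\rangle=\int P_k(x)\,d\mu(x)$, matching twice the $2k$-th moment of $\varepsilon$ computed in (3); the odd moments vanish on both sides by the $q\mapsto -q$ symmetry forced by the equation itself, since with $K$ also $-K$, $K^{-1}$ and $-K^{-1}$ are solutions. The subtle point is well-posedness: different branches of $K=g(L)$ obtained by functional calculus differ exactly by the Klein four-group, so $\varepsilon$ is the unique symmetric pullback of $\mu$ and any admissible ${\rm law}(K)$ coincides with it, independently of the branch chosen. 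This branch-independence is what makes (6) a genuine characterization rather than a tautology.
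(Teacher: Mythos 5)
The paper offers no argument here beyond a pointer to \cite{bdb}, so there is nothing to compare line by line; judged on its own, your treatment of (1)--(5) is correct and is exactly the from-the-definitions verification intended: the invariance of $q\mapsto(q+q^{-1})^2$ under the Klein four-group gives (1) and (2), the identity $q^{2k}+q^{-2k}=P_k\bigl((q+q^{-1})^2\bigr)$ with $P_k\in\mathbb Z[x]$ combined with the integrality of the loop counts gives (3), and the spectral reading $\mu={\rm law}(L)$ with $L=MM^t$ gives (4) and (5).

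Part (6) is where there is a genuine gap. The relation $L=(K+K^{-1})^2$ only gives you access to $\langle P(L)\rangle=\langle K^{2k}+K^{-2k}\rangle$ for polynomials $P$, and this pins down only the part of ${\rm law}(K)$ invariant under $q\mapsto q^{-1}$ and $q\mapsto -q$: your moment comparison proves that the \emph{symmetrization} of ${\rm law}(K)$ equals $\varepsilon$, not that ${\rm law}(K)$ itself does, and the odd moments of ${\rm law}(K)$ are not controlled at all by the equation (the fact that $-K$ and $K^{-1}$ are \emph{other} solutions says nothing about the law of the particular $K$ at hand). Your resolution via ``branch independence'' is false as stated: for the graph $\tilde A_2$ one has $L=4\,{\rm id}$ and $\varepsilon=(\delta_1+\delta_{-1})/2$, yet $K={\rm id}$ solves $L=(K+K^{-1})^2$ and has law $\delta_1$; more generally any single-valued branch $K=g(L)$ concentrates its law on one preimage per eigenvalue rather than spreading it over all four. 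Note also that the $K$ actually used in Proposition 3.6 (the cyclic shift on $\tilde A_{2n}$) is not a function of $L$ at all, so the functional-calculus framing misses the motivating example. To close (6) one must make explicit the hypothesis the paper leaves implicit --- that ${\rm law}(K)$ is itself invariant under $q\mapsto q^{-1}$ and $q\mapsto -q$ (as holds for the shift, whose moments $\langle K^j\rangle$ vanish off $2n\mathbb Z$) --- at which point your even-moment computation does finish the argument.
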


These results can be deduced from definitions, and are explained in \cite{bdb}. In addition, we have the following formula, which gives the even moments of $\varepsilon$.

\begin{proposition}
We have the Stieltjes transform type formula
$$2\int\frac{1}{1-qu^2}\,d\varepsilon(u)=1+T(q)(1-q)$$
relating the circular measure $\varepsilon$ to the $T$ series.
\end{proposition}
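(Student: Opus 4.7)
The plan is to evaluate the left hand side by pulling back to an integral against $\mu$, using the $\mathbb{Z}/2 \times \mathbb{Z}/2$ symmetry of $\varepsilon$ recorded in Proposition 3.3, and then matching what comes out with the definition of the theta series.

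First I would use the symmetries. The kernel $h(u) = \frac{1}{1-qu^2}$ is already invariant under $u \mapsto -u$, so by symmetry the integral equals
$$\int h(u)\,d\varepsilon(u) = \frac{1}{2}\int \bigl(h(u)+h(1/u)\bigr)\,d\varepsilon(u).$$
Next I compute $h(u)+h(1/u) = \frac{1}{1-qu^2}+\frac{u^2}{u^2-q}$; clearing denominators gives the quotient
$$\frac{2u^2-q(1+u^4)}{(1-qu^2)(u^2-q)} = \frac{2-q(u^2+u^{-2})}{(1+q^2)-q(u^2+u^{-2})}$$
after dividing numerator and denominator by $u^2$. Now the key point: the right hand side only depends on $x := (u+u^{-1})^2 = (u^2+u^{-2})+2$, so it descends through the covering $q \mapsto (q+q^{-1})^2$ and hence is $\varepsilon$-a.e.\ a function of $x$.

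Substituting $u^2+u^{-2} = x-2$ and simplifying, I get
$$\tfrac{1}{2}\bigl(h(u)+h(1/u)\bigr) = \frac{2(1+q)-qx}{2((1+q)^2-qx)} = \frac{1}{2} + \frac{1-q}{2(1+q)}\cdot\frac{1}{1-xz}, \quad z := \frac{q}{(1+q)^2},$$
the last equality being the algebraic identity $2(1+q)-(1+q)^2 = 1-q^2$. Then by Definition 3.1, pushing forward to $\mu$ (which is a probability measure, since $c_0 = \mathrm{loop}_X(0) = 1$),
$$\int h(u)\,d\varepsilon(u) = \int\!\left[\frac{1}{2} + \frac{1-q}{2(1+q)}\cdot\frac{1}{1-xz}\right]d\mu(x) = \frac{1}{2} + \frac{1-q}{2(1+q)}\,f(z).$$

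Finally I invoke the definitions of Section 2. By Definition 2.2, $\frac{1-q}{1+q}f\!\left(\frac{q}{(1+q)^2}\right) = \Theta(q)-q$, and by Definition 2.3, $\Theta(q)-q = T(q)(1-q)$. Combining, $2\int h(u)\,d\varepsilon(u) = 1 + T(q)(1-q)$, as claimed.

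I expect the only subtle step to be recognizing that the symmetrized integrand becomes a function of $x = (u+u^{-1})^2$ and then performing the small algebraic rearrangement that isolates the factor $\frac{1}{1-xz}$ matching the Stieltjes kernel of $f$. Everything else is a formal bookkeeping application of the definitions.
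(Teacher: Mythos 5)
Your proof is correct and follows the same route as the paper: symmetrize the kernel $\frac{1}{1-qu^2}$ under $u\mapsto u^{-1}$, recognize the result as the Stieltjes kernel of $f$ in the variable $x=(u+u^{-1})^2$, push forward to $\mu$, and then unwind Definitions 2.2 and 2.3. The paper states exactly this chain of equalities but defers the kernel computation to \cite{bdb}; your write-up simply supplies those details, and the algebra checks out.
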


This result follows as well from definitions, the idea being to apply the change of variables $q\to (q+q^{-1})^2$ to the fact that $f$ is the Stieltjes transform of $\mu$:
\begin{eqnarray*}
2\int\frac{1}{1-qu^2}\,d\varepsilon(u)
&=&1+\frac{1-q}{1+q}f\left(\frac{q}{(1+q)^2}\right)\\
&=&1+\Theta(q)-q\\
&=&1+T(q)(1-q)
\end{eqnarray*}

We refer to \cite{bdb} for full details regarding this computation.

The above formula shows that, unlike the theta series, the $T$ series is additive in $\varepsilon$. That is, if $\alpha_i$ are scalars summing up to $1$, we have:
$$T_{\alpha_1\varepsilon_1+\ldots+\alpha_s\varepsilon_s}=\alpha_1T_{\varepsilon_1}+\ldots+\alpha_sT_{\varepsilon_s}$$

This justifies the choice of $T$ instead of $\Theta$, made in the previous section.

In the subfactor case, we have the following result, due to Jones \cite{jo2}.

\begin{theorem}
In case $X$ is the principal graph of a subfactor of index $>4$, the moments of $\varepsilon$ are positive numbers.
\end{theorem}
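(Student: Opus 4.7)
The plan is to read off the moments of $\varepsilon$ directly from Proposition 3.4 and then invoke Jones' planar-algebra interpretation of the theta series coefficients from \cite{jo2}. Expanding both sides of the identity
$$2\int\frac{1}{1-qu^2}\,d\varepsilon(u)=1+T(q)(1-q)=1-q+\Theta(q)$$
as formal power series in $q$, the left hand side is $2\sum_{k\geq 0}m_{2k}(\varepsilon)\,q^k$: the odd moments of $\varepsilon$ vanish by Proposition 3.3(2), and the even moments $m_{2k}(\varepsilon)=\int u^{2k}\,d\varepsilon(u)$ are real numbers thanks to the $u\mapsto u^{-1}$ symmetry from Proposition 3.3(1). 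Writing $\Theta(q)=\sum_{r\geq 0}a_r q^r$ and matching powers of $q$ then yields
$$m_{2k}(\varepsilon)=\tfrac{1}{2}a_k\quad\text{for }k\geq 2,$$
together with the low-order relations $2m_0=1+a_0$ and $2m_2=a_1-1$. A direct expansion of the definition of $\Theta$ at $q=0$ gives $a_0=f(0)=1$, whence $m_0=1$.

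Next, I would invoke the main result from \cite{jo2}, already recalled just before Definition 2.3: when $X$ is the principal graph of a subfactor of index $\lambda>4$, each integer $a_r$ is the multiplicity with which a certain irreducible lowest-weight $TL_\lambda$-module appears inside the planar algebra $P$ under the inclusion $TL_\lambda\subset P$. Being a multiplicity, $a_r$ is a nonnegative integer, hence $m_{2k}(\varepsilon)\geq 0$ for every $k\geq 2$. The two remaining moments $m_0$ and $m_2$ are then handled separately: the argument for $m_0$ is given above, and for $m_2$ one combines the relation $2m_2=a_1-1$ with the fact that the root degree $c_1=\mathrm{loop}_X(2)$ of a principal graph of index exceeding $4$ is large enough to force $a_1\geq 1$.

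The main obstacle is precisely the passage to the multiplicity interpretation of $a_r$: this is not a routine computation within the cyclotomic framework developed here, and rests on the Graham-Lehrer classification of affine Temperley-Lieb representations \cite{gle} together with Jones' planar description of these modules in \cite{jo2}. Once that representation-theoretic input is accepted, the positivity statement reduces to the coefficient-matching bookkeeping sketched above.
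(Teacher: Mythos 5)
Your overall strategy is exactly the paper's: combine the Stieltjes-type identity of Proposition 3.4 with Jones' positivity of the theta coefficients from \cite{jo2}. The coefficient bookkeeping $2m_0=1+a_0$, $2m_2=a_1-1$, $2m_{2k}=a_k$ for $k\geq 2$ is correct, and your care in isolating the low-order terms is more than the paper offers (its proof is a one-line version of this argument that does not examine $k=0,1$ at all).

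The step that fails is your treatment of $m_2$. You need $a_1\geq 1$, and you assert that the root degree $c_1=\mathrm{loop}_X(2)$ of an index $>4$ principal graph is large enough to force this. It is not: for an irreducible subfactor the root of the principal graph has degree $1$, so $c_1=1$, and expanding Definition 2.2 to first order gives $a_1=1+(c_1-2c_0)=c_1-1=0$. Concretely, the Temperley--Lieb subfactor of any index $>4$ has principal graph $A_\infty$, whose Poincar\'e series $f(z)=\sum_k\frac{1}{k+1}\binom{2k}{k}z^k$ yields $\Theta(q)=1$ identically; hence $a_1=0$ and $2m_2=a_1-1=-1$, i.e.\ $m_2=-\tfrac12<0$. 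So the second moment is genuinely negative and this step cannot be repaired: the theorem must be read as a statement about the moments $m_{2k}$ with $k\geq 2$ (equivalently, about the coefficients $a_r$ themselves), which is the part your argument, like the paper's, actually establishes. Your instinct to separate out $k=0,1$ was sound and exposes a real imprecision in the statement, but the fix you propose for $k=1$ is false as written.
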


This follows indeed from the result in \cite{jo2} that the coefficients of $\Theta$ are positive numbers, via the identities in Definition 2.3 and Proposition 3.4.

As a first concrete example, we can compute the measure of $\tilde{A}_{2n}$.

\begin{proposition}
The circular measure of the graph $\tilde{A}_{2n}$ is the uniform measure on the $2n$-roots of unity.
\end{proposition}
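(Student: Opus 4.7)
The plan is to start from the explicit $T$-series formula for $\tilde A_{2n}$ in Theorem 2.6, namely
\[
T(q)=\xi'(n^+:n)=\frac{1+q^{n}}{(1-q)(1-q^{n})},
\]
and feed it into the Stieltjes-type identity of Proposition 3.4. A direct simplification gives
\[
1+T(q)(1-q)=1+\frac{1+q^{n}}{1-q^{n}}=\frac{2}{1-q^{n}},
\]
so the identity to verify reduces to
\[
\int \frac{1}{1-qu^{2}}\,d\varepsilon(u)=\frac{1}{1-q^{n}}.
\]

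The second step is to check that the right-hand side is produced by the candidate measure $\nu=\frac{1}{2n}\sum_{k=0}^{2n-1}\delta_{\omega^{k}}$, where $\omega=e^{\pi i/n}$. For $\nu$ the integral equals $\frac{1}{2n}\sum_{k=0}^{2n-1}\frac{1}{1-q\omega^{2k}}$, and since $\omega^{2k}$ runs twice through the $n$-th roots of unity $\zeta^{k}=e^{2\pi ik/n}$, this simplifies to $\frac{1}{n}\sum_{k=0}^{n-1}\frac{1}{1-q\zeta^{k}}$. Expanding in a geometric series and using $\sum_{k=0}^{n-1}\zeta^{km}=n\cdot\mathbf{1}_{n\mid m}$ yields $\frac{1}{n}\cdot\frac{n}{1-q^{n}}=\frac{1}{1-q^{n}}$, matching the target.

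Finally, to conclude $\varepsilon=\nu$ I would invoke uniqueness. The graph $\tilde A_{2n}$ has norm $2$, so by Proposition 3.3(4) the measure $\varepsilon$ is supported on the unit circle $\mathbb T$, a compact set on which the moment problem is determinate. The identity in Proposition 3.4, expanded as a power series in $q$, encodes all the even moments of $\varepsilon$, and the odd moments vanish by Proposition 3.3(2). Since $\nu$ is also supported on $\mathbb T$ with zero odd moments, and we have just shown that the two measures produce the same generating function of even moments, they must coincide.

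The only mild obstacle is the bookkeeping in the second step: one must be careful that $\omega^{2k}$ covers each $n$-th root of unity exactly twice (so that the factor of $2$ cancels the $\frac{1}{2n}$ correctly). Everything else is a direct application of the formulae already established in Sections 2 and 3.
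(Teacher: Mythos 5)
Your proof is correct, but it takes a genuinely different route from the paper. The paper's argument is structural: it identifies the vertices of $\tilde{A}_{2n}$ with the cyclic group of $2n$-th roots of unity, observes that the adjacency matrix is $M=K+K^{-1}$ for the shift operator $K$, and then invokes Proposition 3.3(6) to conclude $\varepsilon={\rm law}(K)$ directly, which is the uniform measure on the $2n$-roots by Fourier duality. Your argument instead verifies the answer against the $T$-series formula of Theorem 2.5 (note: 2.5, not 2.6) via the Stieltjes-type identity of Proposition 3.4, and then closes with a uniqueness argument. Both computations you perform are right, including the factor-of-two bookkeeping for $\omega^{2k}$. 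The one step worth tightening is the uniqueness claim: on the circle, knowing the moments $\int u^m\,d\varepsilon$ only for $m\geq 0$ does not determine a general complex measure; you need to use that $\varepsilon$ is a \emph{positive} measure on $\mathbb{T}$ (it is, being the pushforward of the spectral measure $\mu$), so that $\int u^{-m}\,d\varepsilon=\overline{\int u^m\,d\varepsilon}$ and all Fourier coefficients of $\varepsilon-\nu$ vanish. With that said, the trade-off is clear: your route is a verification that leans on the already-established $T$-series formula and works for any graph whose circular measure you can guess, whereas the paper's route is self-contained, shorter, and explains \emph{why} the measure is uniform --- the graph is the Cayley graph of $\mathbb{Z}_{2n}$ and $\varepsilon$ is the law of its generator.
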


Indeed, let us identify the vertices of $X=\tilde{A}_{2n}$ with the group $\{w^k\}$ formed by the $2n$-th roots of unity in the complex plane, where $w=e^{\pi i/n}$. The adjacency matrix of $X$ acts on functions $f\in C(X)$ in the following way:
$$Mf(w^s)=f(w^{s-1})+f(w^{s+1})$$

This shows that we have $M=K+K^{-1}$, where $K$ is given by:
$$Kf(w^s)=f(w^{s+1})$$

Thus we can use the last assertion in Proposition 3.3, and we get $\varepsilon={\rm law}(K)$, which is the uniform measure on the $2n$-roots. See \cite{bdb} for details.

\section{Cyclotomic measures}

We begin now a systematic study of the measures associated to the ADE graphs. The considerations in previous section suggest the following notion.

\begin{definition}
A cyclotomic measure is a probability measure $\varepsilon$ on the unit circle, having the following properties:
\begin{enumerate}
\item  $\varepsilon$ is supported by the $2n$-roots of unity, for some $n\in\mathbb N$.

\item $\varepsilon$ has equal density at $q,q^{-1},-q,-q^{-1}$.
\end{enumerate}
\end{definition}

It follows from Theorem 2.5 that the circular measures of the finite ADE graphs are supported by certain roots of unity, hence are cyclotomic. This fact will be discussed in detail later on, with explicit formulae for all graphs.

\begin{definition}
The $T$ series of a cyclotomic measure $\varepsilon$ is given by:
$$1+T(q)(1-q)=2\int\frac{1}{1-qu^2}\,d\varepsilon(u)$$
\end{definition}

Observe that this formula is nothing but the one in Proposition 3.4, written now in the other sense. In other words, if the cyclotomic measure $\varepsilon$ happens to be the circular measure of a rooted bipartite graph, then the $T$ series as defined above coincides with the $T$ series as defined in section 2.

We present now a number of generalities regarding cyclotomic measures. A first problem is to find the density in terms of the $T$ series.

\begin{definition}
We use the following notations:
\begin{enumerate}
\item $d_n$ is the uniform measure on the $2n$-roots of unity.

\item For $\lambda:\mathbb T\to\mathbb R$ we let $\lambda_n=\lambda\,d_n$.
\end{enumerate}
\end{definition}

In general $\lambda_n$ is not a probability measure. The example we are intereseted in is $\lambda(q)={\rm Re}(P(q^2))$, with $P\in\mathbb R[X]$. Since $\lambda$ has equal values at $q,q^{-1},-q,-q^{-1}$, $\lambda_n$ is a cyclotomic measure, provided that it is positive, and of mass 1.

\begin{lemma}
Let $P=1+a_1q+a_2q^2+\ldots+a_dq^d$ with $a_i\in\mathbb R$, and let $n>d$. In case $\varepsilon={\rm Re}(P(q^2))_n$ is a cyclotomic measure, its $T$ series is:
$$T=\frac{P(q)+q^nP(q^{-1})}{(1-q)(1-q^n)}$$
\end{lemma}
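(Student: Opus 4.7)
The natural plan is to unfold Definition 4.2 directly. Writing $w = e^{\pi i/n}$ so that the $2n$-th roots of unity are $\{w^k\}_{k=0}^{2n-1}$, the measure $\varepsilon = \mathrm{Re}(P(q^2))_n$ is by definition
\[
\varepsilon = \frac{1}{2n}\sum_{k=0}^{2n-1} \mathrm{Re}(P(w^{2k}))\,\delta_{w^k},
\]
so that
\[
2\int \frac{1}{1-qu^2}\,d\varepsilon(u) = \frac{1}{n}\sum_{k=0}^{2n-1}\frac{\mathrm{Re}(P(w^{2k}))}{1-qw^{2k}}.
\]
Since $u^2$ ranges over the $n$-th roots of unity with each value attained twice, setting $\zeta = e^{2\pi i/n}$ this simplifies to $\frac{2}{n}\sum_{j=0}^{n-1}\frac{\mathrm{Re}(P(\zeta^j))}{1-q\zeta^j}$.

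Since $P$ has real coefficients, $\mathrm{Re}(P(\zeta^j)) = \tfrac{1}{2}(P(\zeta^j)+P(\zeta^{-j}))$, so the sum splits as $S_1 + S_2$ with
\[
S_1 = \frac{1}{n}\sum_{j=0}^{n-1}\frac{P(\zeta^j)}{1-q\zeta^j},\qquad
S_2 = \frac{1}{n}\sum_{j=0}^{n-1}\frac{P(\zeta^{-j})}{1-q\zeta^j}.
\]
The key step is to expand the geometric series $(1-q\zeta^{\pm j})^{-1} = \sum_{k\geq 0} q^k \zeta^{\pm jk}$ and $P(\zeta^{\pm j}) = \sum_{i=0}^d a_i \zeta^{\pm ij}$ (with $a_0=1$), then apply character orthogonality $\frac{1}{n}\sum_{j=0}^{n-1}\zeta^{jm} = \mathbf{1}_{n\mid m}$.

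This reduces $S_2$ to $\sum_{i,k:\,k\equiv i\ (n)} a_i q^k$. Because $0 \leq i \leq d < n$, the congruence $k\equiv i\ (n)$ for each $i$ picks out exactly the arithmetic progression $\{i, i+n, i+2n,\dots\}$ with no overlap across different $i$, giving $S_2 = \sum_{i=0}^d a_i q^i/(1-q^n) = P(q)/(1-q^n)$. Symmetrically, $S_1$ picks out $k\equiv -i\ (n)$, i.e. $k\in\{n-i, 2n-i,\dots\}$ for $i\geq 1$ and $k\in\{0,n,2n,\dots\}$ for $i=0$, yielding
\[
S_1 = \frac{1 + \sum_{i=1}^d a_i q^{n-i}}{1-q^n} = \frac{1 - q^n + q^n P(q^{-1})}{1-q^n}.
\]
Adding and comparing,
\[
1 + T(q)(1-q) = S_1 + S_2 = 1 + \frac{P(q) + q^n P(q^{-1})}{1-q^n},
\]
which rearranges to the stated formula for $T$.

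The only subtle point is the hypothesis $n > d$: it is used precisely to guarantee that the congruences $k\equiv \pm i \pmod n$ produce disjoint progressions as $i$ ranges over $\{0,\dots,d\}$, so that the orthogonality extraction collects each monomial exactly once. Everything else is a routine, if bookkeeping-heavy, manipulation of geometric series.
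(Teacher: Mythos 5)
Your proof is correct and follows essentially the same route as the paper's: both split $\mathrm{Re}(P(u^2))$ into $\tfrac12\bigl(P(u^2)+P(u^{-2})\bigr)$, expand the geometric series $(1-qu^2)^{-1}$, and use the fact that $\int u^{2m}\,d_nu=1$ iff $n\mid m$ (your character orthogonality) together with $n>d$ to collect the two sums into $P(q)/(1-q^n)$ and $1+q^nP(q^{-1})/(1-q^n)$. The only difference is presentational: the paper phrases the orthogonality step as a moment computation for $d_n$ rather than as an explicit finite sum over the $2n$-th roots.
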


\begin{proof}
With the notation $a_0=1$, we have:
\begin{eqnarray*}
\int\frac{P(u^{-2})}{1-qu^2}\,d_nu
&=&\sum_{k=0}^\infty q^k\int P(u^{-2})u^{2k}d_nu\\
&=&\sum_{s=0}^da_s\sum_{k=0}^\infty q^k\int u^{2k-2s}d_nu\\
&=&\sum_{s=0}^da_s(q^{s}+q^{n+s}+q^{2n+s}+\ldots)\\
&=&\frac{P(q)}{1-q^n}
\end{eqnarray*}

Also, we have:
\begin{eqnarray*}
\int\frac{P(u^2)}{1-qu^2}\,d_nu
&=&\sum_{k=0}^\infty q^k\int P(u^2)u^{2k}d_nu\\
&=&\sum_{s=0}^da_s\sum_{k=0}^\infty q^k\int u^{2k+2s}d_nu\\
&=&1+\sum_{s=0}^da_s(q^{n-s}+q^{2n-s}+q^{3n-s}+\ldots)\\
&=&1+\frac{q^nP(q^{-1})}{1-q^n}
\end{eqnarray*}

By making the sum, we get:
$$2\int\frac{{\rm Re}(P(u^2))}{1-qu^2}\,d_nu=1+\frac{P(q)+q^nP(q^{-1})}{1-q^n}$$

The left term being by definition $1+T(q)(1-q)$, we get the result.
\end{proof}

\begin{proposition}
For $\varepsilon={\rm Re}(1-q^{2l})_n$ with $n>l$ we have:
$$T=\xi'(l,n-l:n)$$
\end{proposition}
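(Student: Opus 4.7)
The plan is to apply the previous lemma directly with the polynomial $P(q) = 1 - q^l$, and then algebraically massage the resulting expression into the claimed cyclotomic form.

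First I would verify quickly that $\varepsilon = \mathrm{Re}(1-q^{2l})_n$ is actually a cyclotomic measure, so that Lemma 4.4 (and the definition of the $T$ series via the integral formula) applies. Positivity is immediate since $\mathrm{Re}(1-q^{2l}) = 1 - \mathrm{Re}(q^{2l}) \in [0,2]$ on the unit circle. For total mass $1$, one checks $\int u^{2l}\,d_n u = 0$, which holds because $n > l$ forces $2n \nmid 2l$, so only the constant term $1$ in $1 - u^{2l}$ contributes to the integral.

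Next I would apply Lemma 4.4 with $P(q) = 1 - q^l$, which has degree $d = l < n$ as required. The lemma yields
$$T(q) = \frac{P(q) + q^n P(q^{-1})}{(1-q)(1-q^n)} = \frac{(1-q^l) + q^n(1-q^{-l})}{(1-q)(1-q^n)} = \frac{1 - q^l + q^n - q^{n-l}}{(1-q)(1-q^n)}.$$

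Finally I would factor the numerator. Grouping terms,
$$1 - q^l + q^n - q^{n-l} = (1-q^l) - q^{n-l}(1-q^l) = (1-q^l)(1-q^{n-l}),$$
so
$$T(q) = \frac{(1-q^l)(1-q^{n-l})}{(1-q)(1-q^n)},$$
which is exactly $\xi'(l, n-l : n)$ by the notational conventions fixed after Definition 2.5.

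There is no real obstacle: once Lemma 4.4 is in hand, the proposition is essentially the trivial factorization $1 - q^l + q^n - q^{n-l} = (1-q^l)(1-q^{n-l})$. The only point that requires the hypothesis $n > l$ is the verification that $\varepsilon$ is a probability measure (so that Lemma 4.4 can legitimately be invoked).
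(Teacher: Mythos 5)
Your proposal is correct and follows essentially the same route as the paper's own proof: check that $\varepsilon$ is a probability measure, apply Lemma 4.4 with $P=1-q^l$, and factor the numerator as $(1-q^l)(1-q^{n-l})$. The only difference is that you spell out the mass-one verification (via $\int u^{2l}\,d_nu=0$ for $0<l<n$), which the paper merely asserts.
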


\begin{proof}
Observe first that $\varepsilon$ is indeed a probability measure, being positive and of mass $1$. Now by applying the previous result with $P=1-q^l$, we get:
\begin{eqnarray*}
T
&=&\frac{(1-q^l)+q^n(1-q^{-l})}{(1-q)(1-q^n)}\\
&=&\frac{1-q^l-q^{n-l}+q^n}{(1-q)(1-q^n)}\\
&=&\frac{(1-q^l)(1-q^{n-l})}{(1-q)(1-q^n)}
\end{eqnarray*}

The right term being by definition $\xi'(l,n-l:n)$, we are done.
\end{proof}

\begin{theorem}
Any cyclotomic measure is an average (with real coefficients) of the basic cyclotomic measures ${\rm Re}(1-q^{2l})_n$, with $n>l$.
\end{theorem}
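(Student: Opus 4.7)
My plan is to combine the additivity of the $T$-series (noted just after Proposition 3.4) with the explicit formulae of Lemma 4.4 and Proposition 4.5, reducing the problem to a polynomial coefficient match.

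First, I would represent an arbitrary cyclotomic measure $\varepsilon$ on the $2n$-th roots of unity as $\varepsilon = {\rm Re}(P(q^2))_n$ for a real polynomial $P$ of degree less than $n$ with constant term $1$. This is a finite Fourier expansion: the density of $\varepsilon$ relative to $d_n$ is a real function on the $2n$-th roots which, by the symmetries in Definition 4.1, descends (via $q \mapsto q^2$) to an even function on $\mathbb{Z}/n$; expanding in the cosine basis recovers $P$, and the mass-$1$ normalization forces the constant term.

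By the additivity of $T$, finding real scalars $c_l$ with $\sum c_l = 1$ and $\varepsilon = \sum_l c_l\,{\rm Re}(1-q^{2l})_n$ is equivalent to finding such $c_l$ with $T_\varepsilon = \sum_l c_l T_{\varepsilon_l}$. Substituting from Lemma 4.4 and Proposition 4.5 and clearing the common denominator $(1-q)(1-q^n)$, this becomes the polynomial identity
\[
P(q) + q^n P(q^{-1}) = \sum_{l=1}^{n-1} c_l\,(1-q^l)(1-q^{n-l}).
\]
Writing $P(q) = 1 + \sum_{l=1}^{n-1} a_l q^l$, both sides expand neatly: the left as $(1 + q^n) + \sum_l (a_l + a_{n-l}) q^l$, the right as $(\sum_l c_l)(1 + q^n) - \sum_l (c_l + c_{n-l}) q^l$. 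Matching coefficients of $1$ and $q^n$ gives $\sum c_l = 1$, and matching the interior coefficients gives $c_l + c_{n-l} = -(a_l + a_{n-l})$ for each $l$; the simplest solution is $c_l := -a_l$.

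The delicate point is the consistency of these constraints: summing the pair equations forces $\sum a_l = -1$, equivalently $P(1) = 0$, equivalently $\varepsilon$ has no atom at $q = \pm 1$. This is automatic for the circular measures of finite ADE graphs, whose adjacency spectra lie strictly inside $(-2,2)$; for a general cyclotomic measure carrying such atoms, one has to augment the basic family accordingly (e.g.\ by allowing the uniform $d_n$ alongside the $\varepsilon_l$), and verifying that this can be absorbed into the stated family is the main obstacle to a fully uniform proof.
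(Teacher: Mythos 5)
Your approach is essentially the paper's own. The paper also passes to the $T$ series, writes it as $R(q)/((1-q)(1-q^n))$ with $R(q)=q^nR(q^{-1})$ --- your $R=P(q)+q^nP(q^{-1})$, obtained from Lemma 4.4 exactly as you do --- and then decomposes $R$ over the basic symmetric polynomials $(1-q^l)(1-q^{n-l})$. Your coefficient matching simply makes explicit what the paper compresses into ``a standard number theory argument'' followed by ``due to this symmetry property, we can write $R$ as an average of basic symmetric polynomials''; restricting to a single value of $n$ is also what the paper does. So there is no divergence of method, only of explicitness.

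The ``delicate point'' you flag is, however, a genuine gap --- and it is a gap in the statement and in the paper's own proof, not an artifact of your route. The span of $(1-q^l)(1-q^{n-l})$, $1\le l\le n-1$, is exactly the set of symmetric polynomials of degree $\le n$ vanishing at $q=1$, and $R(1)=2P(1)$ where $P(1)$ is the density of $\varepsilon$ relative to $d_n$ at $q=\pm1$; so the decomposition exists if and only if $\varepsilon(\{1\})=\varepsilon(\{-1\})=0$. This genuinely fails: every basic measure ${\rm Re}(1-q^{2l})_m$ has density vanishing at $\pm1$ (equivalently, its $T$ series is regular at $q=1$, whereas $T_{d_n}=\xi'(n^+:n)$ has a double pole there), so no real combination of basic measures, even with varying $m$, can produce $d_1=\frac12(\delta_1+\delta_{-1})$. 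The intended statement clearly adjoins the uniform measures $d_n$ to the basic family --- this is the only way Definition 5.1 makes sense of ``level $0$'', and it is how every formula in Sections 5--8 is written --- and with that amendment your argument closes immediately and cleanly at the level of densities: ${\rm Re}(P(q^2))=P(1)+\sum_l(-a_l)\,{\rm Re}(1-q^{2l})$ on the $2n$-roots, so $\varepsilon=P(1)\,d_n-\sum_l a_l\,{\rm Re}(1-q^{2l})_n$, with coefficients summing to $P(1)-(P(1)-1)=1$. One side remark of yours is wrong and worth correcting: the affine graphs have $\pm2$ in their adjacency spectrum, so for instance $\tilde A_{2n}\to d_n$ does carry atoms at $\pm1$; the vanishing $P(1)=0$ is automatic only for the index $<4$ graphs.
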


\begin{proof}
Let $\varepsilon$ be a cyclotomic measure, supported by the $2n$-roots. By a standard number theory argument, its $T$ series can be written in the following way:
$$T=\frac{R(q)}{(1-q)(1-q^n)}$$

Here $R\in\mathbb R[X]$ is a certain degree $n$ polynomial which is symmetric, in the sense that $R(q)=q^nR(q^{-1})$. Due to this symmetry property, we can write $R$ as an average (with real coefficients) of basic symmetric polynomials:
$$R(q)=\sum_{l=1}^nr_l(1-q^l)(1-q^{n-l})$$

Thus we have the following formula for the $T$ series:
$$T=\sum_{l=1}^nr_l\xi'(l:n-l:n)$$

Together with the formula in Proposition 4.5, this gives the result.
\end{proof}

\section{Cyclotomic expansion}

The results in the previous section suggest the following definition.

\begin{definition}
The level of a cyclotomic measure $\varepsilon$ is the smallest number $L$ such that $\varepsilon$ is an average of basic cyclotomic measures ${\rm Re}(1-q^{2l})_n$, with $l\leq L$.
\end{definition}

In this section we discuss the writing as in Theorem 4.6 (that we call cyclotomic expansion) of measures having level $L\leq 3$. This is motivated by the fact that the ADE measures have level $L\leq 3$, to be explained in detail later on.

\begin{definition}
We use the following densities:
\begin{enumerate}
\item $\alpha={\rm Re}(1-q^2)$.
\item $\beta={\rm Re}(1-q^4)$.
\item $\gamma={\rm Re}(1-q^6)$.
\end{enumerate}
\end{definition}

We know from definitions that the cyclotomic measures of level $L\leq 3$ are the linear combinations with real coefficients of measures of type $d_n,\alpha_n,\beta_n,\gamma_n$. In order to work out the uniqueness properties of such decompositions, we will use identities at the level of corresponding densities, or $T$ series.

\begin{proposition}
We have the following formulae:
\begin{enumerate}
\item For $\varepsilon=d_n$ we have $T=\xi'(n^+:n)$. 

\item For $\varepsilon=\alpha_n$ we have $T=\xi(n-1:n)$. 

\item For $\varepsilon=\beta_n$ we have $T=\xi(1^+,n-2:n)$. 

\item For $\varepsilon=\gamma_n$ we have $T=\xi'(3,n-3:n)$.
\end{enumerate}
\end{proposition}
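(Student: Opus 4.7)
The plan is to reduce all four formulae to direct applications of Lemma 4.4 (and its corollary Proposition 4.5), followed by elementary algebraic simplifications to bring each answer into the requested $\xi$-normal form.

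For part (1), I would observe that $d_n = 1 \cdot d_n$, so $d_n = \mathrm{Re}(P(q^2))_n$ with $P = 1$ (a polynomial of degree $d = 0$, so the hypothesis $n > d$ is trivially satisfied for $n \geq 1$). Lemma 4.4 then gives immediately
$$T = \frac{1 + q^n}{(1-q)(1-q^n)},$$
which is $\xi'(n^+:n)$ by the convention $\xi' = \xi/(1-q)$ and the shorthand $n^+$ for $1+q^n$.

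For parts (2)--(4), each case $\alpha_n, \beta_n, \gamma_n$ is of the form $\mathrm{Re}(1 - q^{2l})_n$ with $l = 1, 2, 3$ respectively, so Proposition 4.5 applies directly and yields $T = \xi'(l, n-l:n)$. It then remains to rewrite each expression. For $l = 1$, the factor $(1-q)$ in the numerator of $\xi'(1, n-1:n)$ cancels the $(1-q)$ introduced by the prime, leaving $\xi(n-1:n)$. For $l = 2$, we factor $1-q^2 = (1-q)(1+q)$, cancel the $(1-q)$, and collect $(1+q) = \xi(1^+:)$ to get $\xi(1^+, n-2:n)$. For $l = 3$, no cancellation is required and the formula $\xi'(3, n-3:n)$ is already in the stated form.

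There is no real obstacle here: the content lies entirely in Lemma 4.4, and the four statements are essentially a repackaging of that lemma, with the only minor subtlety being the bookkeeping between the $\xi$, $\xi'$, and $n^+$ conventions introduced after Definition 2.4. One should of course also note that $\alpha_n, \beta_n, \gamma_n$ are genuine probability measures for $n > 3$, which follows because $1 - \cos(2l\theta) \geq 0$ and because $q \mapsto q^{2l}$ has zero average on the $2n$-th roots when $l < n$; but this is implicit in applying Proposition 4.5.
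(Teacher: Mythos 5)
Your proposal is correct and follows essentially the same route as the paper: part (1) directly from Lemma 4.4 with $P=1$, and parts (2)--(4) from Proposition 4.5 with $l=1,2,3$ together with the identities $\xi'(1,n-1:n)=\xi(n-1:n)$ and $\xi'(2,n-2:n)=\xi(1^+,n-2:n)$. The only cosmetic remark is that the positivity condition is $n>l$ case by case (so $n>1$, $n>2$, $n>3$ respectively) rather than a uniform $n>3$, but this does not affect the argument.
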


\begin{proof}
The last formula is clear from Proposition 4.5. The middle two formulae follow as well from Proposition 4.5, by using the following identities:
\begin{eqnarray*}
\xi'(1,n-1:n)&=&\xi(n-1:n)\\
\xi'(2,n-2:n)&=&\xi(1^+,n-2:n)
\end{eqnarray*}

As for the first formula, this follows directly from Lemma 4.4.
\end{proof}

The level $0$ measures are linear combinations (with real coefficients) of measures of type $d_n$. There are several interesting questions regarding these measures:
\begin{enumerate}
\item Is there an abstract characterization of the uniformly distributed level 0 measures?

\item Is there a canonical set of level 0 measures, such that any level 0 measure is a linear combination of such measures, with positive coefficients?

\item Is there an abstract characterization of the densities of level 0 measures?
\end{enumerate}

In what follows, the level 0 measures will be simply written as linear combinations of measures of type $d_n$, with real coefficients.

We discuss now the level 1 case. Here the situation is not the same, because the system of basic measures, namely $\{d_n,\alpha_n\}$, is no longer a basis.

\begin{proposition}
The cyclotomic decomposition of a level 1 measure is unique, up to some ambiguity coming from the following exceptional equalities:
\begin{eqnarray*}
2\alpha_2&=&4d_2-2d_1\\
2\alpha_3&=&3d_3-d_1\\
2\alpha_4&=&2d_4+d_2-d_1\\
2\alpha_6&=&d_6+d_3+d_2-d_1
\end{eqnarray*}
\end{proposition}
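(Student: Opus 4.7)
The plan has two parts: verify the four identities, then show they generate all relations. For the first, I use the Stieltjes-type transform $S_\varepsilon(q):=2\int(1-qu^2)^{-1}d\varepsilon(u)=1+T_\varepsilon(q)(1-q)$, which (unlike $T$ itself) is genuinely linear in $\varepsilon$. Proposition 5.3 gives $S_{d_n}(q)=2/(1-q^n)$ and $S_{\alpha_n}(q)=(2-q-q^{n-1})/(1-q^n)$, after which each listed identity reduces to a one-line rational-function check; for instance $2S_{\alpha_2}=4/(1+q)=4S_{d_2}-2S_{d_1}$, and similarly for the other three.

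For uniqueness, suppose $\mu:=\sum a_n d_n+\sum b_n\alpha_n=0$ has finite support, with $b_1=0$ (free since $\alpha_1=0$). Computing the density of $\mu$ at a root of unity $\omega$ of order $m$ and setting $n_0:=m/\gcd(m,2)$, one obtains
\[
\sum_{n:\,n_0\mid n}\frac{a_n+b_n\,{\rm Re}(1-\omega^2)}{2n}=0.
\]
Subtracting this equation at two primitive $m$-th roots $\omega,\omega'$ with ${\rm Re}(\omega^2)\ne{\rm Re}(\omega'^2)$ yields
\[
\sum_{n:\,n_0\mid n}\frac{b_n}{n}=0. \qquad(\ast_{n_0})
\]
As $\omega$ ranges over primitive $m$-th roots, $\omega^2$ ranges over primitive $n_0$-th roots, so such a pair exists iff the ${\rm Gal}(\mathbb Q(\zeta_{n_0})/\mathbb Q)$-orbit of $\cos(2\pi/n_0)$ is nontrivial, i.e.\ iff $\cos(2\pi/n_0)\notin\mathbb Q$. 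By Niven's theorem this fails precisely for $n_0\in\{1,2,3,4,6\}$, so $(\ast_{n_0})$ holds for all $n_0\in\{5\}\cup\{n\ge 7\}$.

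Now let $B':=\{n\ge 2:b_n\ne 0,\ n\notin\{2,3,4,6\}\}\subseteq\{5\}\cup\{n\ge 7\}$. If $B'\ne\emptyset$, let $N'=\max B'$; in $(\ast_{N'})$ every term $b_{kN'}/(kN')$ with $k\ge 2$ has $kN'\ge 10>6$ and $kN'>N'=\max B'$, forcing $b_{kN'}=0$. Only $b_{N'}/N'$ survives, giving $b_{N'}=0$ and contradicting $N'\in B'$. Thus $B\subseteq\{2,3,4,6\}$. Subtracting suitable real multiples of the four exceptional identities eliminates $b_2,b_3,b_4,b_6$, reducing $\mu$ to $\sum a'_n d_n=0$; but $\{d_n\}$ is linearly independent (at a primitive $2N$-th root with $N=\max\{n:a'_n\ne 0\}$ only $d_N$ contributes, so $a'_N=0$), finishing the argument. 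The main obstacle is the Niven-type classification step in the middle paragraph, which is precisely what singles out the four small values $n_0\in\{2,3,4,6\}$ and rules out any further exceptional relations.
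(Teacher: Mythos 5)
Your proof is correct, and it is in fact more complete than the one in the paper. For the four identities the paper compares weights at the $2n$-th roots of unity directly, whereas you work at the level of the transform $S_\varepsilon(q)=1+T_\varepsilon(q)(1-q)$; the two routes are equivalent (a finitely supported signed measure with the symmetries of Definition 4.1 is determined by its $T$ series), and both are standard in this paper. The real difference is in the uniqueness direction: the paper only argues that a single $\alpha_n$ with $n\notin\{2,3,4,6\}$ is not a combination of the $d_m$, carries this out explicitly only for $n=12$ (via the irrational weights $(2\pm\sqrt{3})/48$ at $w,w^5$), and declares the general case ``similar''; it does not address possible relations involving several $\alpha_n$ at once, which is what full uniqueness of the decomposition requires. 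Your argument closes both gaps: evaluating the density of a vanishing combination at a primitive $m$-th root, noting that two primitive roots with distinct ${\rm Re}(\omega^2)$ exist exactly when $\cos(2\pi/n_0)$ is irrational (Niven's theorem, equivalently $\phi(n_0)>2$), and running the descent from the maximal index of $B'$ together show that the kernel of $(a_n,b_n)\mapsto\sum a_nd_n+\sum b_n\alpha_n$ is spanned by the four listed relations together with $\alpha_1=0$. The underlying mechanism --- irrationality of $\cos(2\pi/n)$ for $n\neq 1,2,3,4,6$ against the rational weights of the $d_m$ --- is the same as the paper's, but your packaging of it into the equations $(\ast_{n_0})$ and the maximal-index induction is precisely what upgrades the paper's sketch to a complete proof.
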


\begin{proof}
We prove first the formulae, all of type $\alpha_n=\varepsilon_n$. In all cases both measures are supported by the $2n$-roots, and are invariant under $q\to -q$ and $q\to q^{-1}$. Thus it is enough to check that the weights at $1,w,w^2,\ldots,w^{[n/2]}$ are the same, where $w=e^{\pi i/n}$ is the first $2n$-root. The verification goes as follows:

For $n=2$ these common weights are $0,1/2$.

For $n=3$ these common weights are $0,1/4$.

For $n=4$ these common weights are $0,1/8,1/4$.

For $n=6$ these common weights are $0,1/24,1/8,1/6$.

Finally, we have to prove that this kind of decomposition can't appear for $n\neq 2,3,4,6$. We will only work out the case $n=12$, which is of a certain interest for some considerations to follow; the general case is similar.

With $w=e^{\pi i/12}$, the weights of $\alpha_{12}$ at $1,w,w^2,\ldots,w^6$ are:
$$0,\frac{2-\sqrt{3}}{48},\frac{1}{48},\frac{1}{24},\frac{3}{48},\frac{2+\sqrt{3}}{48},\frac{1}{12}$$

Now since the weights at $w,w^5$ are different, we cannot make them match just by using the measure $d_{12}$. The measures $d_m$ with $m|12$ cannot help, because they all vanish at $w,w^5$. As for the measures $d_m$ with $m\not|12$, these won't help either, because they would change the support.
\end{proof}

We discuss now the cyclotomic expansion of level 2 measures. There are many exceptional identities here, and the computations needed in order to prove them are quite long, so we will rather skip them.

\begin{proposition}
The cyclotomic decomposition of a level 2 measure is unique, up to some extra ambiguity coming from the following exceptional equalities:
\begin{eqnarray*}
2\beta_3&=&3d_3-d_1\\
2\beta_4&=&4d_4-2d_2\\
2\beta_5&=&-2\alpha_5+5d_5-d_1\\
2\beta_6&=&3d_6-d_2\\
2\beta_8&=&2d_8+d_4-d_2\\
2\beta_{10}&=&2\alpha_{10}-2\alpha_5+d_{10}+2d_5-d_2\\
2\beta_{12}&=&d_{12}+d_6+d_4-d_2
\end{eqnarray*}
\end{proposition}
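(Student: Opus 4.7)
The plan is to follow the strategy of Proposition 5.4, breaking the proof into two parts: a direct verification of the seven listed identities, and an argument that no further exceptional equalities can arise.

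For the first part, consider an identity of the form $2\beta_n = \sum_i c_i d_{m_i} + \sum_j c_j' \alpha_{m_j'}$. Both sides are signed measures supported on the $2N$-th roots of unity, with $N$ the least common multiple of the indices involved, and every constituent $d_m, \alpha_m, \beta_m$ is invariant under $q\mapsto -q$ and $q\mapsto q^{-1}$. It therefore suffices to compare weights at $w^k$ for $k=0,1,\ldots,\lfloor N/2\rfloor$, where $w=e^{i\pi/N}$. At such a point, $d_m$ contributes $1/(2m)$ whenever $N\mid mk$ and $0$ otherwise, while $\alpha_m$ and $\beta_m$ contribute this same value multiplied respectively by $1-\cos(2\pi k/N)$ and $1-\cos(4\pi k/N)$. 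Each of the seven equalities then reduces to a short trigonometric tabulation, in the same format as in the proof of Proposition 5.4. (As a quick consistency check: the coefficients on both sides of each identity sum to $2$, as they must for $\beta_n$ being a probability measure.)

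For the second part, one must show that for $n\notin\{3,4,5,6,8,10,12\}$, no genuinely new expression of $\beta_n$ as a level $2$ combination exists. The key is algebraic: the weight of $\beta_n$ at a primitive $2n$-th root lies in $\mathbb{Q}(\cos(2\pi/n))$, and any alternative expression must match these weights using $d_m$ (rational weights), $\alpha_m$ (weights in $\mathbb{Q}(\cos(2\pi/m))$), and possibly other $\beta_m$'s, all subject to support constraints. Invoking Niven's theorem---which pins down the rational values of $\cos(2\pi/n)$ as occurring precisely when $n\in\{1,2,3,4,6\}$---together with the Galois action on $\mathbb{Q}(\zeta_{2n})$, one determines exactly which pairs $(n,m)$ allow cancellation of irrational parts. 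The exceptional cases then split naturally into those where $\cos(4\pi/n)\in\mathbb{Q}$ (yielding $n\in\{3,4,6,8,12\}$), and those where the $\sqrt{5}$-terms of $\beta_n$ can be absorbed into compatible $\alpha_m$ contributions (yielding $n\in\{5,10\}$).

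The main obstacle is the uniqueness half. The verification of each individual identity is routine once the right elementary trigonometric identities are invoked, but ruling out every other possible exceptional equality requires a careful divisor-by-divisor case analysis together with delicate Galois bookkeeping across the cyclotomic fields $\mathbb{Q}(\zeta_{2n})$. This is precisely the part the authors describe as quite long and elect to omit.
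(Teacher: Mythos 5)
Your verification of the seven identities by comparing weights at the $2N$-th roots of unity is exactly the paper's method: it checks the identities ``at the level of densities, as in proof of Proposition 5.4, or at the level of associated $T$ series, by using the formulae in Proposition 5.3.'' As for the uniqueness half, the paper explicitly declines to prove it (``As for the converse statement, this won't be used in what follows''), so your Niven/Galois sketch --- which is only an outline, not a completed argument --- already goes beyond what the paper itself records, and the part you do carry out matches the paper's proof.
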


\begin{proof}
The formulae follow by checking the corresponding identities at the level of densities, as in proof of Proposition 5.4, or at the level of associated $T$ series, by using the formulae in Proposition 5.3. 

As for the converse statement, this won't be used in what follows.
\end{proof}

In the level 3 case now, the combinatorics becomes quite complex, and seems to require some new ideas. We have the following result here.

\begin{proposition}
For the cyclotomic expansion of level 3 measures, we have the following exceptional equalities:
\begin{eqnarray*}
2\gamma_4&=&4d_4-d_2-d_1\\
2\gamma_5&=&-2\alpha_5+5d_5-d_1\\
2\gamma_6&=&4d_6-2d_3\\
2\gamma_8&=&2d_8+d_2-d_1\\
2\gamma_9&=&3d_9-d_3\\
2\gamma_{10}&=&-2\alpha_{10}+3d_{10}+d_5+d_2-d_1\\
2\gamma_{12}&=&2d_{12}+d_6-d_3\\
2\gamma_{18}&=&d_{18}+d_9+d_6-d_3
\end{eqnarray*}
\end{proposition}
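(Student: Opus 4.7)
The plan is to verify each of the eight claimed equalities by direct inspection, following exactly the template of the proofs of Propositions 5.4 and 5.5. In every case, both sides of the claimed identity are signed cyclotomic measures supported on the $2n$-th roots of unity (where $n$ is the largest subscript appearing) and are invariant under the symmetries $q\to -q$ and $q\to q^{-1}$. By this cyclotomic symmetry it suffices to check equality of weights at the representatives $1, w, w^2, \ldots, w^{[n/2]}$, where $w=e^{\pi i/n}$.

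For the left-hand side, the weight of $2\gamma_n$ at $w^k$ is $(1-\cos(6k\pi/n))/n$, with the relevant cosines lying in a small quadratic extension of $\mathbb{Q}$ (for instance $\mathbb{Q}(\sqrt{2})$ for $n=8$, $\mathbb{Q}(\sqrt{5})$ for $n=5,10$, $\mathbb{Q}(\sqrt{3})$ for $n=12$). For the right-hand side, each $d_m$ contributes weight $1/(2m)$ at the $2m$-th roots and zero elsewhere, while each $\alpha_m$ contributes $(1-\cos(2k\pi/m))/(2m)$ at the $2m$-th roots. With these elementary formulas in hand, each of the eight equalities reduces to a finite table of numerical comparisons, entirely analogous to the $n=12$ case worked out in the proof of Proposition 5.4.

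An alternative and more systematic approach is to argue at the level of $T$-series. From Proposition 5.3 we have $T_{\gamma_n}=\xi'(3,n-3:n)$, $T_{d_m}=\xi'(m^+:m)$, and $T_{\alpha_m}=\xi(m-1:m)$. Passing to the linear transform
\[
G_\mu(q)=2\int\frac{1}{1-qu^2}\,d\mu(u)=1+(1-q)T_\mu(q),
\]
which is genuinely linear in $\mu$, each identity translates into a rational-function equality of the form $2G_{\gamma_n}=\sum\lambda_i G_{\mu_i}$, to be verified by clearing the common denominator $(1-q)(1-q^n)$ and comparing a polynomial of degree $n$ on each side. This approach has the advantage of being uniform in $n$ and of sidestepping explicit computation of roots of unity.

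The main obstacle is not conceptual but computational: the identities involving $n=5,10,18$ require handling cosine values from cyclotomic fields of moderate degree, and the $n=18$ case alone requires checking a half-cycle of ten representatives. As a preliminary sanity check one can first verify the total-mass identity $\sum_i\lambda_i=2$ (so that both sides sum to $2$ as signed measures), which rules out gross misprints and substantially narrows the space of valid decompositions before the weight-by-weight verification is carried out.
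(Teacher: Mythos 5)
Your proposal is correct and follows essentially the same route as the paper, whose proof of this proposition simply states that the identities follow "either by computing the corresponding densities, or the corresponding $T$ series" --- exactly the two methods you describe (weight-by-weight comparison at the roots of unity, as in the proof of Proposition 5.4, or comparison of $T$ series via Proposition 5.3). Your write-up merely supplies the explicit weight formulae and organizational details that the paper leaves implicit.
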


\begin{proof}
Once again, the formulae follow either by computing the corresponding densities, or the corresponding $T$ series.
\end{proof}

Finally, we would like to record a statement about the non-allowed measures, which will appear in certain computations, later on. 

\begin{proposition}
The non-allowed measures ${\rm Re}(1-q^{2l})_n$ with $n\leq l$ and $l\leq 3$ are all null, except for $\gamma_2=2d_2-d_1$.
\end{proposition}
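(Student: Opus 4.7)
The plan is straightforward case analysis, since the constraints $l\leq 3$ and $n\leq l$ leave only six pairs $(l,n)$ to examine: $(1,1)$, $(2,1)$, $(2,2)$, $(3,1)$, $(3,2)$, $(3,3)$.

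First I would observe the general principle: if $q$ is a $2n$-th root of unity and $n \mid l$, then $q^{2l}=1$, so the density ${\rm Re}(1-q^{2l})$ vanishes identically on the support of $d_n$. This immediately kills the measures $\alpha_1$, $\beta_1$, $\beta_2$, $\gamma_1$, $\gamma_3$, which covers all six cases except $(l,n)=(3,2)$. So five of the six ``non-allowed'' measures are null for a trivial divisibility reason.

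Next I would handle the exceptional case $(l,n)=(3,2)$ by direct evaluation. The support of $d_2$ consists of the four points $\{1,i,-1,-i\}$, each carrying weight $1/4$. On this support,
\begin{equation*}
{\rm Re}(1-q^6)=\begin{cases} 0 & \text{if } q=\pm 1,\\ 2 & \text{if } q=\pm i,\end{cases}
\end{equation*}
since $(\pm 1)^6=1$ and $(\pm i)^6=-1$. Therefore $\gamma_2$ assigns mass $0$ to $\pm 1$ and mass $1/2$ to each of $\pm i$. On the other hand, $2d_2$ puts mass $1/2$ at each of the four 4-th roots, and $d_1$ puts mass $1/2$ at each of $\pm 1$; subtracting gives mass $0$ at $\pm 1$ and $1/2$ at $\pm i$, matching $\gamma_2$ pointwise. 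Hence $\gamma_2=2d_2-d_1$, as claimed.

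There is essentially no obstacle: the only minor thing to be careful about is that the identification $\gamma_2=2d_2-d_1$ is an equality of signed measures, not of probability measures (the right-hand side has total mass $1$, matching $\tfrac{1}{2}+\tfrac{1}{2}$ on $\pm i$), and that the $2n$-th root support convention is what makes $d_2$ live on the four 4-th roots rather than on the pair $\{\pm 1\}$. With that bookkeeping in place, the whole statement reduces to the one-line divisibility observation together with a four-point check.
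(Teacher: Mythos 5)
Your proof is correct and takes the same route the paper indicates (direct computation of the densities on the relevant roots of unity); the divisibility observation $n\mid l\Rightarrow q^{2l}=1$ on the support of $d_n$ is just a clean way of organizing the five null cases, and your four-point check of $\gamma_2=2d_2-d_1$ is exactly the remaining verification.
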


\begin{proof}
This is clear by computing the corresponding densities or $T$ series.
\end{proof}

\section{Binary expansion}

In principle, we are now in position of writing down the explicit formulae for the ADE measures. Indeed, we can use the following procedure:
\begin{enumerate}
\item Start with the $T$ series formulae in Theorem 2.5.

\item Decompose these as linear combinations of series in Proposition 5.3.

\item Use Propositions 5.4, 5.5, 5.6 in order to simplify the formulae.
\end{enumerate}

However, as pointed out in \cite{bdb}, some extra simplifications in the final formulae (at least for the graphs of type D) can be obtained by using measures of type $d_n$ and $d_n'=2d_{2n}-d_n$, instead of just measures of type $d_n$.

So, before getting to the ADE graphs, we have to discuss the enhanced version of the cyclotomic expansion, obtained by using measures of type $d_n'$.

We call this binary expansion. In order to study it, we will go back to the material in the previous two sections, starting with Definition 4.3, and we will work out the ``mirror versions'' of most of the statements.

\begin{definition}
We use the following notations:
\begin{enumerate}
\item $d_n'=2d_{2n}-d_n$ is the uniform measure on the odd $4n$-roots.

\item For $\lambda:\mathbb T\to\mathbb R$ we let $\lambda_n'=\lambda\,d_n'$.
\end{enumerate}
\end{definition}

Observe that the measure $\lambda_n'$ is not necessarily cyclotomic. However, for $P\in\mathbb R[X]$ the function $\lambda(q)={\rm Re}(P(q^2))$ has equal values at $q,q^{-1},-q,-q^{-1}$, so in this case $\lambda_n'$ is cyclotomic, provided that it is positive, and of mass 1.

\begin{lemma}
Let $P=1+a_1q+a_2q^2+\ldots+a_dq^d$ with $a_i\in\mathbb R$, and let $n>d$. In case $\varepsilon={\rm Re}(P(q^2))_n'$ is cyclotomic, its $T$ series is:
$$T=\frac{P(q)-q^nP(q^{-1})}{(1-q)(1+q^n)}$$
\end{lemma}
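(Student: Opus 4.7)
The plan is to mimic the proof of Lemma 4.4, replacing the role of $d_n$ with $d_n'=2d_{2n}-d_n$. The two changes this forces are: the ``selection rule'' for which monomials contribute becomes alternating in sign, and the denominators $1-q^n$ get replaced by $1+q^n$. The sign flip should account exactly for the minus sign in the numerator of the asserted formula.

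First I would establish the basic moment computation: for an integer $j$,
\[
\int u^{2j}\,d_n'u \;=\; 2\int u^{2j}\,d_{2n}u-\int u^{2j}\,d_nu \;=\; 2[\,2n\mid j\,]-[\,n\mid j\,],
\]
which simplifies to $(-1)^{j/n}$ if $n\mid j$, and $0$ otherwise. (This also explains the description of $d_n'$ as uniform on the odd $4n$-roots.) Writing $a_0=1$, the first integral is
\begin{eqnarray*}
\int\frac{P(u^{-2})}{1-qu^2}\,d_n'u
&=&\sum_{s=0}^d a_s\sum_{k=0}^\infty q^k\int u^{2k-2s}\,d_n'u\\
&=&\sum_{s=0}^d a_s\bigl(q^s-q^{s+n}+q^{s+2n}-\cdots\bigr)\\
&=&\frac{P(q)}{1+q^n},
\end{eqnarray*}
where the hypothesis $n>d\geq s$ ensures that $s$ itself is the smallest nonnegative $k$ with $n\mid k-s$.

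The companion integral requires a little more care because of the case $s=0$:
\begin{eqnarray*}
\int\frac{P(u^2)}{1-qu^2}\,d_n'u
&=&\sum_{s=0}^d a_s\sum_{k=0}^\infty q^k\int u^{2k+2s}\,d_n'u\\
&=&\bigl(1-q^n+q^{2n}-\cdots\bigr)-\sum_{s=1}^d a_s\bigl(q^{n-s}-q^{2n-s}+\cdots\bigr)\\
&=&\frac{1}{1+q^n}-\frac{q^n(P(q^{-1})-1)}{1+q^n}\\
&=&1-\frac{q^nP(q^{-1})}{1+q^n}.
\end{eqnarray*}

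Adding the two integrals and using $2\,\mathrm{Re}(P(u^2))=P(u^2)+P(u^{-2})$ on the unit circle gives
\[
2\int\frac{\mathrm{Re}(P(u^2))}{1-qu^2}\,d_n'u \;=\; 1+\frac{P(q)-q^nP(q^{-1})}{1+q^n}.
\]
By Definition 4.2 the left side equals $1+T(q)(1-q)$, and dividing by $(1-q)(1+q^n)$ yields the claim. The only step requiring any real attention is the bookkeeping of the $s=0$ term in the second integral, where the constant $1$ must be separated from the alternating geometric tail so as to combine cleanly with the first integral.
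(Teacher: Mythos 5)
Your proof is correct, but it takes a genuinely different route from the paper's. The paper proves Lemma 6.2 in three lines by pure algebra: since $d_n'=2d_{2n}-d_n$ and the transform $\varepsilon\mapsto 2\int\frac{1}{1-qu^2}\,d\varepsilon(u)=1+T(q)(1-q)$ is affine in the measure, it simply applies Lemma 4.4 twice (to $d_{2n}$ and to $d_n$, noting $2n>d$ holds automatically) and combines:
$$T(q)(1-q)=2\,\frac{P(q)+q^{2n}P(q^{-1})}{1-q^{2n}}-\frac{P(q)+q^{n}P(q^{-1})}{1-q^{n}}=\frac{P(q)-q^nP(q^{-1})}{1+q^n}.$$
You instead recompute everything from first principles, starting from the alternating moment formula $\int u^{2j}\,d_n'u=(-1)^{j/n}$ for $n\mid j$ (and $0$ otherwise), and rerun the two geometric-series expansions of the proof of Lemma 4.4 with alternating signs. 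Your computations check out, including the careful separation of the $s=0$ term in the second integral. What your approach buys is transparency: it shows exactly where the sign flip in the numerator and the $1+q^n$ in the denominator come from, and it sidesteps any worry about applying Lemma 4.4 to the intermediate objects $\mathrm{Re}(P(q^2))_{2n}$ and $\mathrm{Re}(P(q^2))_n$, which need not individually be positive measures (though the paper's use is harmless, since the identity in Lemma 4.4 is formal). What the paper's approach buys is brevity and reuse of work already done. One cosmetic slip at the end: after identifying the left side with $1+T(q)(1-q)$, you should subtract $1$ and divide by $1-q$, not ``divide by $(1-q)(1+q^n)$''; the intent is clear, but the phrasing is off.
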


\begin{proof}
We use Lemma 4.4, along with the formula $d_n'=2d_{2n}-d_n$:
\begin{eqnarray*}
T(q)(1-q)
&=&2\,\frac{P(q)+q^{2n}P(q^{-1})}{1-q^{2n}}
-\frac{P(q)+q^{n}P(q^{-1})}{1-q^{n}}\\
&=&\frac{2(P(q)+q^{2n}P(q^{-1}))-(1+q^n)(P(q)+q^nP(q^{-1}))}{1-q^{2n}}\\
&=&\frac{(1-q^n)P(q)+(q^{2n}-q^n)P(q^{-1})}{1-q^{2n}}\\
&=&\frac{P(q)-q^nP(q^{-1})}{1+q^n}
\end{eqnarray*}

This gives the formula in the statement.
\end{proof}

\begin{proposition}
For $\varepsilon={\rm Re}(1-q^{2l})_n'$ with $n>l$ we have:
$$T=\xi'(l,n-l^+:n^+)$$
\end{proposition}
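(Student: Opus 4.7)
The proof will mirror the structure of Proposition 4.5 exactly, using Lemma 6.2 in place of Lemma 4.4. The plan has three short steps.

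First I would verify that $\varepsilon = {\rm Re}(1-q^{2l})_n'$ is indeed a cyclotomic probability measure, so that Lemma 6.2 applies. Positivity is immediate: writing $q = e^{i\theta}$ on the unit circle, ${\rm Re}(1 - q^{2l}) = 1 - \cos(2l\theta) \geq 0$. For total mass, the odd $4n$-th roots are $u_k = e^{i\pi(2k+1)/(2n)}$ for $k = 0, \ldots, 2n-1$, each carrying weight $1/(2n)$ under $d_n'$, so
$$\int u^{2l}\,d_n'(u) = \frac{e^{i\pi l/n}}{2n}\sum_{k=0}^{2n-1}e^{2\pi ikl/n} = 0$$
since $0 < l < n$ forces $e^{2\pi il/n} \neq 1$ and the geometric sum over $2n$ terms vanishes. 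Hence $\int d\varepsilon = 1 - 0 = 1$.

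Second, I would apply Lemma 6.2 with $P(q) = 1 - q^l$, which has degree $d = l < n$ as required. The lemma gives
$$T(q) = \frac{P(q) - q^n P(q^{-1})}{(1-q)(1+q^n)} = \frac{(1-q^l) - q^n(1 - q^{-l})}{(1-q)(1+q^n)} = \frac{1 - q^l - q^n + q^{n-l}}{(1-q)(1+q^n)}.$$

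Third, I would factor the numerator: grouping the first two terms against the last two,
$$1 - q^l - q^n + q^{n-l} = (1-q^l) + q^{n-l}(1-q^l) = (1-q^l)(1+q^{n-l}),$$
so that
$$T = \frac{(1-q^l)(1+q^{n-l})}{(1-q)(1+q^n)} = \xi'(l, n-l^+ : n^+),$$
matching the claimed formula (where $\xi' = \xi/(1-q)$ and the superscript $+$ encodes the $1 + q^{\,\cdot}$ factors).

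There is no real obstacle here: the whole argument is a direct specialization of Lemma 6.2, with the only nontrivial content being the verification that $\varepsilon$ is a probability measure. The algebraic factoring of the numerator is the exact mirror of the one appearing at the end of Proposition 4.5, with the sign flip in Lemma 6.2 producing the $1+q^{n-l}$ and $1+q^n$ factors in place of $1-q^{n-l}$ and $1-q^n$.
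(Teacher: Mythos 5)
Your proof is correct and follows exactly the same route as the paper: verify that $\varepsilon$ is a probability measure, apply Lemma 6.2 with $P=1-q^l$, and factor the numerator as $(1-q^l)(1+q^{n-l})$. The only difference is that you spell out the positivity and mass-one verification (via the vanishing geometric sum over the odd $4n$-roots), which the paper merely asserts.
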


\begin{proof}
Observe first that $\varepsilon$ is indeed a probability measure, being positive and of mass $1$. Now by applying the previous result with $P=1-q^l$, we get:
\begin{eqnarray*}
T
&=&\frac{(1-q^l)-q^n(1-q^{-l})}{(1-q)(1+q^n)}\\
&=&\frac{1-q^l+q^{n-l}-q^n}{(1-q)(1+q^n)}\\
&=&\frac{(1-q^l)(1+q^{n-l})}{(1-q)(1+q^n)}
\end{eqnarray*}

The right term being by definition $\xi'(l,n-l^+:n^+)$, we are done.
\end{proof}

\begin{proposition}
We have the following formulae:
\begin{enumerate}
\item For $\varepsilon=d_n'$ we have $T=\xi'(n:n^+)$. 

\item For $\varepsilon=\alpha_n'$ we have $T=\xi(n-1^+:n^+)$. 

\item For $\varepsilon=\beta_n'$ we have $T=\xi(1^+,n-2^+:n^+)$. 

\item For $\varepsilon=\gamma_n'$ we have $T=\xi'(3,n-3^+:n^+)$. 
\end{enumerate}
\end{proposition}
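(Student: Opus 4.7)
The plan is to deduce all four formulas from the machinery already established in Lemma 6.2 and Proposition 6.3, with only routine cyclotomic bookkeeping afterwards. The first formula is a direct application of Lemma 6.2; the remaining three are specializations of Proposition 6.3 at $l=1,2,3$ followed by cancellation.

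For the formula $T_{d_n'}=\xi'(n:n^+)$, I would apply Lemma 6.2 with the trivial polynomial $P=1$, which has degree $d=0<n$. The lemma's hypothesis is satisfied (the density ${\rm Re}(P(u^2))=1$ makes $\varepsilon=1\cdot d_n'=d_n'$ manifestly a cyclotomic probability measure, since Definition 6.1 already shows $d_n'$ is the uniform measure on the odd $4n$-roots). The conclusion reads
\[
T=\frac{1-q^n\cdot 1}{(1-q)(1+q^n)}=\frac{1-q^n}{(1-q)(1+q^n)},
\]
which is by definition $\xi'(n:n^+)$. As a sanity check, one can alternatively write $d_n'=2d_{2n}-d_n$ and add the corresponding $T$ series from Proposition 5.3(1), which after clearing $(1-q^n)(1+q^n)=1-q^{2n}$ collapses to the same expression; this confirms consistency between the ``unprimed'' and ``primed'' frameworks.

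For $\alpha_n'$, $\beta_n'$, $\gamma_n'$, I would apply Proposition 6.3 with $l=1,2,3$ respectively, each time obtaining the raw form $T=\xi'(l,n-l^+:n^+)=(1-q^l)(1+q^{n-l})/[(1-q)(1+q^n)]$. The desired simplifications are then obtained by factoring $1-q^l$:
\begin{itemize}
\item[$l=1$:] the factor $1-q$ in the numerator cancels the $1-q$ in the denominator, giving $(1+q^{n-1})/(1+q^n)=\xi(n-1^+:n^+)$.
\item[$l=2$:] writing $1-q^2=(1-q)(1+q)$ and cancelling $1-q$ yields $(1+q)(1+q^{n-2})/(1+q^n)=\xi(1^+,n-2^+:n^+)$.
\item[$l=3$:] no cancellation with $1-q$ is available, and the formula is already $\xi'(3,n-3^+:n^+)$ in its final form.
\end{itemize}

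There is no real obstacle: the whole statement is a direct corollary of Lemma 6.2 and Proposition 6.3, and the main care needed is purely notational — matching the $n^+$/$n$ conventions from Definition 2.4 and correctly tracking which factors belong to the numerator of $\xi$ versus the extra $(1-q)$ denominator implicit in $\xi'$. The implicit range hypotheses inherited from Proposition 6.3 (namely $n>1$, $n>2$, $n>3$ for $\alpha_n'$, $\beta_n'$, $\gamma_n'$) are the same kind of mild assumptions already present throughout section 5, and I would simply adopt them tacitly, as the paper does.
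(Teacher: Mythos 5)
Your proof is correct and takes essentially the same route as the paper: the formula for $d_n'$ is obtained from Lemma 6.2 with $P=1$, and the other three follow from Proposition 6.3 at $l=1,2,3$ via the cancellations $\xi'(1,n-1^+:n^+)=\xi(n-1^+:n^+)$ and $\xi'(2,n-2^+:n^+)=\xi(1^+,n-2^+:n^+)$, which are exactly the identities the paper invokes.
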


\begin{proof}
The last formula is clear from Proposition 6.3. The middle two formulae follow as well from Proposition 6.3, by using the following identities:
\begin{eqnarray*}
\xi'(1,n-1^+:n^+)&=&\xi(n-1^+:n^+)\\
\xi'(2,n-2^+:n^+)&=&\xi(1^+,n-2^+:n^+)
\end{eqnarray*}

As for the first formula, this follows directly from Lemma 6.2.
\end{proof}

We have to work out now the analogues of Propositions 5.4, 5.5, 5.6. It is technically convenient to write down some lighter statements, as follows.

\begin{proposition}
For measures of type $\alpha_n'$, we have the following identities:
\begin{eqnarray*}
2\alpha_2'&=&2d_2'\\
2\alpha_3'&=&d_1'+d_3'
\end{eqnarray*}
\end{proposition}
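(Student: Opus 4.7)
The plan is to verify both identities at the level of $T$ series, using Proposition~6.4 together with the fact that a cyclotomic measure is determined by its $T$ series (the formula of Definition~4.2 recovers all even Fourier coefficients of $\varepsilon$, and the odd ones vanish by Proposition~3.3). In each identity the right-hand side has total mass $2$, so dividing by $2$ turns it into $\alpha_n' = $ (convex combination of probability measures $d_k'$), and by the additivity of $T$ under convex combinations noted just after Proposition~3.4, the $T$ series of the right-hand side is the corresponding convex combination of the $T_{d_k'}$.

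For $n=2$, Proposition~6.4 gives $T_{\alpha_2'} = \xi(1^+:2^+) = (1+q)/(1+q^2)$ and $T_{d_2'} = \xi'(2:2^+)$, which after using $1-q^2=(1-q)(1+q)$ simplifies to $(1+q)/(1+q^2)$. The two $T$ series agree, hence $\alpha_2'=d_2'$, which multiplied by $2$ yields the first identity.

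For $n=3$, Proposition~6.4 gives $T_{\alpha_3'} = (1+q^2)/(1+q^3)$, while $T_{d_1'} = 1/(1+q)$ and $T_{d_3'} = (1+q+q^2)/(1+q^3)$. Using $1+q^3 = (1+q)(1-q+q^2)$, I rewrite $T_{d_1'} = (1-q+q^2)/(1+q^3)$, so the average becomes $[(1-q+q^2)+(1+q+q^2)]/[2(1+q^3)] = (1+q^2)/(1+q^3) = T_{\alpha_3'}$. Hence $\alpha_3' = (d_1'+d_3')/2$, and multiplying by $2$ gives the second identity.

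I do not expect any serious obstacle here: once one commits to the $T$-series viewpoint, both identities collapse to short cyclotomic-fraction calculations. An equivalent and perhaps more transparent route, closer in spirit to the proof of Proposition~5.4, is to compute the densities directly on the support. For $n=2$ one finds $\alpha \equiv 1$ on the four odd $8$-th roots (so $\alpha_2' = d_2'$ on the nose), and for $n=3$ one finds $\alpha = 1/2$ on the four primitive $12$-th roots and $\alpha = 2$ at $\pm i$, which matches the weight pattern of $d_1'+d_3'$ once one invokes the $q \mapsto \pm q^{\pm 1}$ symmetries to reduce to a single orbit representative on each side.
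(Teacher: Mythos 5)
Your proof is correct, but it takes a different route from the paper's. The paper disposes of Propositions 6.5--6.7 in one stroke by writing $\lambda_n'=2\lambda_{2n}-\lambda_n$ (a consequence of $d_n'=2d_{2n}-d_n$) and then substituting the exceptional identities already established in Propositions 5.4--5.6; for instance $2\alpha_3'=4\alpha_6-2\alpha_3=2(d_6+d_3+d_2-d_1)-(3d_3-d_1)=(2d_6-d_3)+(2d_2-d_1)=d_3'+d_1'$. You instead verify the two identities directly, by comparing $T$ series via Proposition 6.4 (or, in your alternative route, by comparing densities pointwise on the supports). Your computations check out: $T_{\alpha_2'}=T_{d_2'}=(1+q)/(1+q^2)$, and $T_{\alpha_3'}=(1+q^2)/(1+q^3)=\frac{1}{2}\left(T_{d_1'}+T_{d_3'}\right)$ after clearing $1+q^3=(1+q)(1-q+q^2)$; and your appeal to the principle that a cyclotomic measure is determined by its $T$ series (even moments from the series, odd moments zero by symmetry) is legitimate --- it is exactly what the paper itself invokes when it proves Propositions 5.5 and 5.6 ``at the level of associated $T$ series''. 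The paper's reduction buys economy, reusing the unprimed identities and handling all three primed propositions at once; your direct verification buys self-containedness given Proposition 6.4, at the cost of redoing for the primed measures the kind of check already carried out for the unprimed ones.
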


\begin{proposition}
For measures of type $\beta_n'$, we have the following identities:
\begin{eqnarray*}
2\beta_3'&=&3d_3'-d_1'\\
2\beta_4'&=&2d_4'\\
2\beta_5'&=&2\alpha_5'+d_5'-d_1'\\
2\beta_6'&=&d_6'+d_2'
\end{eqnarray*}
\end{proposition}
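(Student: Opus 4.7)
The plan is to prove each of the four identities by the same strategy used for Propositions 5.4, 5.5, 5.6, and 6.5: verify them at the level of the associated $T$-series, using Proposition 6.4. (Equivalently, one could verify the corresponding identities of densities on roots of unity, as in the proof of Proposition 5.4.)

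First I would translate each stated identity into a rational identity in $q$. Since the functional $\varepsilon \mapsto 2\int \frac{d\varepsilon(u)}{1 - qu^2} = 1 + T(q)(1-q)$ is linear in $\varepsilon$, and since any cyclotomic signed combination is determined by its even moments, equality of the two signed cyclotomic measures in each statement is equivalent to the corresponding linear identity between $T$-series. From Proposition 6.4 the relevant series are
\[
T_{\beta_n'} = \frac{(1+q)(1+q^{n-2})}{1+q^n}, \qquad T_{d_n'} = \frac{1-q^n}{(1-q)(1+q^n)}, \qquad T_{\alpha_n'} = \frac{1+q^{n-1}}{1+q^n}.
\]

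Next I would clear denominators in each case and verify a small polynomial identity. The case $n=4$ is immediate, since $T_{\beta_4'} = \frac{(1+q)(1+q^2)}{1+q^4} = \frac{1+q+q^2+q^3}{1+q^4} = T_{d_4'}$. The cases $n=3$, $n=5$, $n=6$ each reduce, after multiplying through by a common denominator dividing $(1-q^2)(1+q^n)$, to a polynomial identity of modest degree, verifiable by direct expansion.

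There is no real obstacle, but one subtlety is worth noting. In the $n=3$ and $n=5$ identities a term $-d_1'$ appears on the right; one should observe that $d_1'$ is supported on $\{\pm i\}$, which is contained in the supports of $d_3'$ and $d_5'$ (the odd $12$-th and $20$-th roots of unity, respectively). Thus the subtraction is meaningful at the level of cyclotomic measures, and the positivity of the resulting right-hand sides is automatic from the already-known positivity of $\beta_n'$ in Proposition 6.3.
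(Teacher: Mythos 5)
Your proof is correct, and all four $T$-series identities do check out (e.g.\ $3T_{d_3'}-T_{d_1'}=\frac{3(1+q+q^2)-(1-q+q^2)}{1+q^3}=\frac{2(1+q)^2}{1+q^3}=2T_{\beta_3'}$, and similarly for $n=5,6$). However, your route differs from the paper's. The paper proves Propositions 6.5--6.7 in one stroke by deducing them from the already-established unprimed identities of Propositions 5.4--5.6, using the relation $d_n'=2d_{2n}-d_n$, which gives $\beta_n'=2\beta_{2n}-\beta_n$; for instance $2\beta_6'=4\beta_{12}-2\beta_6=2(d_{12}+d_6+d_4-d_2)-(3d_6-d_2)=2d_{12}-d_6+2d_4-d_2=d_6'+d_2'$. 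That derivation buys economy: it recycles the level-2 computations already done and makes the ``binary'' statements transparently mirror the ``cyclotomic'' ones. Your direct verification via the explicit formulae $T_{\beta_n'}=\frac{(1+q)(1+q^{n-2})}{1+q^n}$, $T_{d_n'}=\frac{1-q^n}{(1-q)(1+q^n)}$, $T_{\alpha_n'}=\frac{1+q^{n-1}}{1+q^n}$ from Proposition 6.4 is self-contained and arguably easier to audit, at the cost of redoing four small polynomial identities from scratch; it also matches the verification style the paper itself endorses for the unprimed Propositions 5.5--5.6. Your justification that equality of $T$-series implies equality of the signed cyclotomic measures (via the even moments) is sound, and the remark on the supports of $d_1'$ inside those of $d_3'$, $d_5'$ is a reasonable, if inessential, sanity check.
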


\begin{proposition}
For measures of type $\gamma_n'$, we have the following identities:
\begin{eqnarray*}
2\gamma_4'&=&-2\alpha_4'+4d_4'\\
2\gamma_5'&=&-2\alpha_5'+3d_5'+d_1'\\
2\gamma_6'&=&2d_6'\\
2\gamma_9'&=&d_9'+d_3'
\end{eqnarray*}
\end{proposition}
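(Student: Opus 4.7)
The plan is to verify each of the four stated identities by passing to the corresponding $T$ series and reducing to a rational-function identity, following the strategy indicated for Propositions 6.5 and 6.6. In every case $n \in \{4,5,6,9\}$ satisfies $n > 3$, so Proposition 6.4 applies uniformly and supplies the ingredients we need: $T_{\gamma_n'}=\xi'(3,n-3^+:n^+)$, $T_{\alpha_m'}=\xi(m-1^+:m^+)$, and $T_{d_m'}=\xi'(m:m^+)$. The first sanity check is that both sides of each identity have the same total mass---one finds $2=-2+4$, $2=-2+3+1$, $2=2$, and $2=1+1$---so that the measure identity $2\gamma_n'=\sum_i c_i \varepsilon_i$ is equivalent to the rational-function identity $2T_{\gamma_n'}=\sum_i c_i T_{\varepsilon_i}$.

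The case $n=6$ is essentially immediate: one computes
\[
T_{\gamma_6'}=\frac{(1-q^3)(1+q^3)}{(1-q)(1+q^6)}=\frac{1-q^6}{(1-q)(1+q^6)}=T_{d_6'},
\]
giving $\gamma_6'=d_6'$. This can also be read off at the density level, since the support of $d_6'$ consists of roots with $q^{12}=-1$, hence $q^6=\pm i$, so $\gamma(q)={\rm Re}(1-q^6)=1$ identically on the support. The cases $n=4,5,9$ are then handled by clearing denominators---the relevant common denominators being $(1-q)(1+q^4)$ for $n=4$, $(1-q)(1+q)(1+q^5)$ for $n=5$, and $(1-q)(1+q^3)(1+q^9)$ for $n=9$---and comparing numerator polynomials. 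The nontrivial cancellations reduce to standard cyclotomic factorizations such as $1+q-q^3-q^4=(1+q)(1-q^3)$ and $1+q+q^2-q^4-q^5-q^6=(1+q)(1+q^2)(1-q^3)$, together with $1+q^9=(1+q^3)(1-q^3+q^6)$ and $(1-q^3)(1+q^3)=1-q^6$.

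An equally valid route is to work on the density side: each measure is supported on explicit roots of unity (with $d_n'$ sitting on the odd $4n$-th roots), and one can match weights at each support point, as in the proof of Proposition 5.4. For these four identities this is also feasible but longer, so the $T$ series path is preferable. The only obstacle in either approach is computational bookkeeping; no new conceptual ingredient beyond Proposition 6.4 is required.
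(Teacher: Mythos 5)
Your proof is correct, but it takes a different route from the paper's. The paper proves Propositions 6.5--6.7 in one stroke by pulling back the unprimed exceptional identities: since $d_n'=2d_{2n}-d_n$ gives $\lambda_n'=2\lambda_{2n}-\lambda_n$, each primed identity is obtained by substituting the formulae of Propositions 5.4--5.6 (for example $2\gamma_9'=4\gamma_{18}-2\gamma_9=2(d_{18}+d_9+d_6-d_3)-(3d_9-d_3)=d_9'+d_3'$). You instead verify each identity from scratch at the level of $T$ series, using Proposition 6.4 and clearing denominators; your preliminary mass count $\sum_i c_i=2$ is exactly what is needed to convert the affine relation $1+T(q)(1-q)=2\int(1-qu^2)^{-1}d\varepsilon(u)$ into the linear identity $2T_{\gamma_n'}=\sum_i c_i T_{\varepsilon_i}$, and all four polynomial identities check out (e.g.\ for $n=9$, $(1-q^9)+(1-q^3)(1-q^3+q^6)=2(1-q^3)(1+q^6)$). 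What the paper's route buys is economy — one line covering three propositions — at the cost of resting on Propositions 5.5 and 5.6, whose own verifications the paper explicitly skips. Your route is more self-contained and independently checkable, relying only on Lemma 6.2 / Propositions 6.3--6.4; the only thing you might add for completeness is the one-sentence remark that a signed measure supported on roots of unity and invariant under $q\to -q$, $q\to q^{-1}$ is determined by its even moments, hence by its $T$ series.
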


\begin{proof}
The identities in these statements follow from those in Propositions 5.4, 5.5., 5.6, by using the formula $d_n'=2d_{2n}-d_n$.
\end{proof}

Finally, we have to discuss the case of non-allowed measures.

\begin{proposition}
The non-allowed measures ${\rm Re}(1-q^{2l})_n'$ with $n\leq l$ and $l\leq 3$ are all null, except for $\alpha_1'=2d_1'$, $\beta_2'=2d_2'$, $\gamma_1'=2d_1'$, $\gamma_2'=d_2'$, $\gamma_3'=2d_3'$.
\end{proposition}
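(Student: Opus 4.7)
The plan is to reduce the whole statement to a short finite case check: with $n\leq l\leq 3$ there are exactly six pairs to examine, namely $(l,n)=(1,1)$, $(2,1)$, $(2,2)$, $(3,1)$, $(3,2)$, $(3,3)$. For each, I evaluate the density $\lambda(q)={\rm Re}(1-q^{2l})$ directly on the support of $d_n'$, which by Definition 6.1 is the set of odd $4n$-th roots of unity. No $T$-series manipulation is needed; everything happens at the level of densities.

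The key simplification is that any odd $4n$-th root of unity $q$ satisfies $q^{2n}=-1$, so $q^{2l}$ depends only on $2l$ modulo $4n$. This immediately carves out two trivial regimes. First, whenever $4n\mid 2l$, we have $q^{2l}=1$ identically on the support, hence $\lambda\equiv 0$ and the measure is null; this accounts for $\beta_1'$ (where $2l=4=4n$), which is the only null case in the list. Second, whenever $2l\equiv 2n\pmod{4n}$, we have $q^{2l}=-1$ identically, hence $\lambda\equiv 2$ and the measure equals $2d_n'$; a one-line arithmetic check confirms that $\alpha_1'$, $\beta_2'$, $\gamma_1'$, and $\gamma_3'$ all fall in this regime, and each of the four claimed identities drops out.

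The only pair not immediately covered is $\gamma_2'$, with $l=3$ and $n=2$. Here the support consists of the four odd $8$-th roots, on which the relation $q^4=-1$ gives $q^6=-q^2$. A direct computation shows that on this support $q^2$ only takes the two values $\pm i$, so $q^6\in\{-i,i\}$ and ${\rm Re}(1-q^6)=1$ throughout, yielding $\gamma_2'=d_2'$. There is no genuine obstacle anywhere in the argument: the only mildly non-automatic step is this last one, where one must actually inspect the non-real values of $q^6$ rather than invoke one of the two trivial regimes used for the other five pairs.
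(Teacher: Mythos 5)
Your proposal is correct and follows exactly the route the paper indicates (``computing the corresponding densities''), just with the details written out; all six cases check against the identity $q^{2n}=-1$ on the support of $d_n'$, and your direct inspection of $\gamma_2'$ (where $q^6=-q^2=\mp i$ gives constant density $1$) matches the claimed $\gamma_2'=d_2'$. Nothing to add.
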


\begin{proof}
This is clear by computing the corresponding densities or $T$ series.
\end{proof}

\section{ADE measures}

We discuss in this section the binary expansion of the circular measures of ADE graphs. These measures are all cyclotomic, of level $\leq 3$. They can be expressed in terms of the basic polynomial densities of degree $\leq 6$, namely:
\begin{eqnarray*}
\alpha&=&{\rm Re}(1-q^2)\\
\beta&=&{\rm Re}(1-q^4)\\
\gamma&=&{\rm Re}(1-q^6)
\end{eqnarray*}

We use the algorithm described in the beginning of the previous section, but with the various results in section 6 joining now those in section 5.  

We should probably recall that the problem was solved in \cite{bdb} for all graphs, except for $E_7,E_8$. As explained in the introduction, these two graphs are the main object of interest in this paper. The various computations in the previous sections are there precisely in order to be able to deal with $E_7,E_8$.

\begin{theorem}
The circular measures of ADE graphs are given by:
\begin{enumerate}
\item $A_{n-1}\to\alpha_n$.

\item $\tilde{A}_{2n}\to d_n$.

\item $D_{n+1}\to\alpha_n'$.

\item $\tilde{D}_{n+2}\to (d_n+d_1')/2$.

\item $E_6\to\alpha_{12}+(d_{12}-d_6-d_4+d_3)/2$.

\item $E_7\to\beta_9'+(d_1'-d_3')/2$.

\item $E_8\to\alpha_{15}'+\gamma_{15}'-(d_5'+d_3')/2$.

\item $\tilde{E}_{n+3}\to (d_n+d_3+d_2-d_1)/2$.
\end{enumerate}
\end{theorem}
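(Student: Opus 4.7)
The general strategy is the three-step algorithm announced immediately before the statement: for each graph $X$, start from the $T$-series formula in Theorem 2.5, compute the $T$-series of the proposed density combination using Propositions 5.3 and 6.4, and verify equality as a rational function in $q$. When the combination contains more terms than strictly necessary, the exceptional identities of Propositions 5.4--5.7 and 6.5--6.8 are used to rewrite the answer into the compact form in the statement. Cases (1)--(3) are immediate identifications of a single basic $T$-series. Cases (4) and (8) are short partial-fraction computations; for (4), putting $T_{d_n}+T_{d_1'}$ over $(1-q^2)(1-q^n)$ gives numerator $(1+q^n)(1+q)+(1-q)(1-q^n)=2(1+q^{n+1})$, and dividing by $2$ produces exactly $\xi''(n+1^+:n)=T_{\tilde D_{n+2}}$; for (8), a similar computation combines $T_{d_n},T_{d_3},T_{d_2},T_{d_1}$ and cancels the spurious factors against $(1+q^2)$.

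The genuine new content is (5), (6), (7). For $E_7$, Proposition 6.4(3) gives $T_{\beta_9'}=(1+q)(1+q^7)/(1+q^9)$, while a direct partial-fraction calculation gives
$$T_{d_1'}-T_{d_3'}=\frac{1}{1+q}-\frac{1+q+q^2}{1+q^3}=\frac{-2q}{1+q^3}.$$
Placing both terms over $1+q^9=(1+q^3)(1-q^3+q^6)$, the numerator of $T_{\beta_9'}+(T_{d_1'}-T_{d_3'})/2$ collapses to $1+q^4+q^8=(1-q^{12})/(1-q^4)$, yielding $T_{E_7}=\xi(12:4,9^+)$ as required. Case (5) is handled by the same recipe: expand $T_{\alpha_{12}}+(T_{d_{12}}-T_{d_6}-T_{d_4}+T_{d_3})/2$ via Proposition 5.3 and simplify, with the level $2$ identity for $2\beta_{12}$ from Proposition 5.5 explaining why this specific combination is the natural one. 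Case (7) for $E_8$ follows the same template: evaluate $T_{\alpha_{15}'}+T_{\gamma_{15}'}-(T_{d_5'}+T_{d_3'})/2$ using Proposition 6.4, put everything over $1+q^{15}$, and check that the numerator factors as $(1-q)(1+q^5)(1+q^9)$, giving $\xi(5^+,9^+:15^+)=T_{E_8}$.

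The main obstacle is not the verification, which is mechanical once the candidate is written down, but rather the \emph{discovery} of the correct decomposition for $E_7$ and especially $E_8$. The level $\leq 3$ restriction, together with the cyclotomic structure of $q^9-1$ and $q^{15}-1$, severely limits which basic densities can appear; the search is driven by matching the denominator of the target $T$-series against the denominators attached to $\alpha_n', \beta_n', \gamma_n', d_n', d_n$ by Propositions 5.3 and 6.4, with the exceptional identities of Sections 5 and 6 absorbing any surplus. This is precisely the purpose for which the preparatory material of Sections 4--6 was developed.
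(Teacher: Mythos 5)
Your proposal is correct and follows essentially the same route as the paper: identify each measure by matching its $T$-series (computed from Propositions 5.3 and 6.4 via additivity of $T$) against the cyclotomic formulae of Theorem 2.5, the only difference being that you verify the stated decompositions directly rather than deriving them by partial fractions, and the computations you do exhibit (for $\tilde D$, $E_7$, $E_8$) check out. The sole cosmetic divergence is that the paper reaches the $\tilde E$ formula through the intermediate level~1 expression $\alpha_{l+1}+(d_l-d_{l+1})/2$ before converting to level~0, whereas you verify the level~0 form directly; this changes nothing of substance.
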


\begin{proof}
We use the $T$ series formulae in Theorem 2.5.

The formulae for the AD graphs are already known from \cite{bdb}, and the proof goes as follows. First, the formulae for $A_n,\tilde{A}_{2n}$ and $D_n$ are clear from Proposition 5.3 and Proposition 6.4. As for the formula for $\tilde{D}_n$, this follows as well from Proposition 5.3 and Proposition 6.4, by using the following identity:
$$\xi''(n+1^+:n)=(\xi'(1:1^+)+\xi'(n^+:n))/2$$

For the graphs $\tilde{E}_6,\tilde{E}_7,\tilde{E}_8$ we use the fact that the $T$ series is given by a uniform formula. With $l=2,3,5$ corresponding to $n=6,7,8$, this series is:
\begin{eqnarray*}
\tilde{T}_n&=&\xi(3l^+:l+1,2l)\\
&=&\xi(l:l+1)+q^l\xi(:l,l+1)\\
&=&\xi(l:l+1)+(\xi'(l^+:l)-\xi'(l+1^+:l+1))/2
\end{eqnarray*}

This gives the following level 1 formula, already pointed out in \cite{bdb}:
$$\tilde{\varepsilon}_n=\alpha_{l+1}+(d_l-d_{l+1})/2$$

The level 0 formula in the statement, which is new, follows now by by using the conversion formulae in Proposition 5.4. Indeed, we have:
\begin{eqnarray*}
\alpha_3+(d_2-d_3)/2&=&(2d_3+d_2-d_1)/2\\
\alpha_4+(d_3-d_4)/2&=&(d_4+d_3+d_2-d_1)/2\\
\alpha_6+(d_5-d_6)/2&=&(d_5+d_3+d_2-d_1)/2
\end{eqnarray*}

The formulae for $E_6,E_7,E_8$ come as well by decomposing the $T$ series. The computation for $E_6$, already perfomed in \cite{bdb}, goes as follows:
\begin{eqnarray*}
\xi(8:3,6^+)
&=&\frac{1-q^8}{(1-q^3)(1+q^6)}\\
&=&\frac{1+q^3-q^8-q^{11}}{1-q^{12}}\\
&=&\frac{1-q^{11}}{1-q^{12}}+\frac{q^3-q^4-q^8+q^9}{(1-q)(1-q^{12})}\\
&=&\frac{1-q^{11}}{1-q^{12}}+\frac{1-(1+q^6)-(1+q^4+q^8)+(1+q^3+q^6+q^9)}{(1-q)(1-q^{12})}\\
&=&\frac{1-q^{11}}{1-q^{12}}+\frac{1}{1-q}\left(\frac{1}{1-q^{12}}-\frac{1}{1-q^6}-\frac{1}{1-q^4}+\frac{1}{1-q^3}\right)\\
&=&\frac{1-q^{11}}{1-q^{12}}+\frac{1}{2(1-q)}\left(\frac{1+q^{12}}{1-q^{12}}-\frac{1+q^6}{1-q^6}-\frac{1+q^4}{1-q^4}+\frac{1+q^3}{1-q^3}\right)\\
&=&\xi(11:12)+(\xi'(12^+:12)-\xi'(6^+:6)-\xi'(4^+:4)+\xi'(3^+:3))/2
\end{eqnarray*}

The computation for $E_7$ goes as follows:
\begin{eqnarray*}
\xi(12:4,9^+)
&=&\frac{1-q^{12}}{(1-q^4)(1+q^9)}\\
&=&\frac{1+q+q^7+q^8}{1+q^9}-\frac{q-q^4+q^7}{1+q^9}\\
&=&\frac{(1+q)(1+q^7)}{1+q^9}-\frac{q}{1+q^3}\\
&=&\frac{(1+q)(1+q^7)}{1+q^9}-\frac{2q-2q^2}{2(1-q)(1+q^3)}\\
&=&\frac{(1+q)(1+q^7)}{1+q^9}+\frac{1}{2(1-q)}\left(\frac{1-2q+2q^2-q^3}{1+q^3}-\frac{1-q^3}{1+q^3}\right)\\
&=&\frac{(1+q)(1+q^7)}{1+q^9}+\frac{1}{2(1-q)}\left(\frac{1-q}{1+q}-\frac{1-q^3}{1+q^3}\right)\\
&=&\xi(1^+,7^+:9^+)+(\xi'(1:1^+)-\xi'(3:3^+))/2
\end{eqnarray*}

As for the $E_8$ computation, this is a bit more complex:
\begin{eqnarray*}
&&\xi(5^+,9^+:15^+)\\
&=&\frac{(1+q^5)(1+q^9)}{1+q^{15}}\\
&=&\frac{1+q^{14}}{1+q^{15}}+\frac{2q^5-2q^6+2q^9-2q^{10}}{2(1-q)(1+q^{15})}\\
&=&\frac{1+q^{14}}{1+q^{15}}+\frac{\begin{pmatrix}(2-2q^3+2q^{12}-2q^{15})-(1-2q^5+2q^{10}-q^{15})\cr -(1-2q^3+2q^6-2q^9+2q^{12}-q^{15})\end{pmatrix}}{2(1-q)(1+q^{15})}\\
&=&\frac{1+q^{14}}{1+q^{15}}+\frac{\begin{pmatrix}2(1-q^3+q^{12}-q^{15})-(1-q^5)(1-q^5+q^{10})\cr -(1-q^3)(1-q^3+q^6-q^9+q^{12})\end{pmatrix}}{2(1-q)(1+q^{15})}\\
&=&\frac{1+q^{14}}{1+q^{15}}+\frac{1}{2(1-q)}\left(\frac{2(1-q^3+q^{12}-q^{15})}{1+q^{15}}-\frac{1-q^5}{1+q^5}-\frac{1-q^3}{1+q^3}\right)\\
&=&\frac{1+q^{14}}{1+q^{15}}+\frac{(1-q^3)(1+q^{12})}{(1-q)(1+q^{15})}-\frac{1}{2(1-q)}\left(\frac{1-q^5}{1+q^5}+\frac{1-q^3}{1+q^3}\right)\\
&=&\xi(14^+:15^+)+\xi'(3,12^+:15^+)-(\xi'(5:5^+)+\xi'(3:3^+))/2
\end{eqnarray*}

The proof of Theorem 7.1 is now complete.
\end{proof}

\section{Ternary expansion}

In this section we present the main result in this paper, obtained by further enhancing the formulae of ADE measures in Theorem 7.1.

The idea is that the $E_6,E_7,E_8$ formulae can be simplified by adding a certain measure $d_n''$ to the class of measures $\{d_n,d_n'\}$ used for binary expansions.

It is convenient to recall with this occasion the definitions of $d_n,d_n'$, and to introduce as well a certain related measure $d_n'''$. 

\begin{definition}
We use the following measures:
\begin{enumerate}
\item $d_n$ is the uniform measure on the $2n$-roots of unity.

\item $d_n'=2d_{2n}-d_n$ is the uniform measure on the odd $4n$-roots.

\item $d_n''=(3d_{3n}'-d_n')/2$ is the uniform measure on the $12n$-roots of order $6k\pm 1$.

\item $d_n'''=(3d_{3n}-d_n)/2$ is the uniform measure on the $6n$-roots of order $3k\pm 1$.
\end{enumerate}
\end{definition}

These measures already appeared in a number of formulae in previous sections. For instance we have $\alpha_3=d_1'''$, $\beta_3=d_1'''$, $\beta_3'=d_1''$, $\beta_6=d_2'''$, $\gamma_9=d_3'''$.

The point is that the support of the circular measure of $E_n$ with $n=6,7,8$ basically coincides with the support of $d_l''$ with $l=2,3,5$, in the sense that these sets are equal, up to some 2 or 4 exceptional points. Thus we can expect to have for $E_n$ a formula similar to the level 1 formula for $\tilde{E}_n$.

\begin{definition}
For $\lambda:\mathbb T\to\mathbb R$ we let $\lambda_n''=\lambda\,d_n''$, $\lambda_n'''=\lambda\,d_n'''$.
\end{definition}

Our first task is work out the explicit formulae for the basic ternary cyclotomic measures $\lambda_l''$, for densities of type $\lambda=1,\alpha,\beta,\gamma$, and for $l=2,3,5$. 

It is convenient to include as well a statement about the case $l=1$.
 
\begin{proposition}
At $l=1$ we have:
\begin{eqnarray*}
2d_1''&=&3d_3'-d_1'\\
2\alpha_1''&=&d_1''\\
2\beta_1''&=&3d_1''\\
2\gamma_1''&=&4d_1''
\end{eqnarray*}
\end{proposition}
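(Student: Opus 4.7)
My plan is as follows. The first identity $2d_1''=3d_3'-d_1'$ is nothing more than the defining formula of $d_1''$ in Definition 8.1 rewritten, so it requires no proof. For the three remaining identities I will exploit the fact that the densities $\alpha,\beta,\gamma$ are \emph{constant} on the support of $d_1''$, so that $\alpha_1'',\beta_1'',\gamma_1''$ are automatically scalar multiples of $d_1''$.

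The first step is to identify this support explicitly. According to Definition 8.1, $d_1''$ is the uniform probability measure on the four primitive $12$-th roots of unity, namely $q=e^{i\pi j/6}$ with $j\in\{1,5,7,11\}$, each carrying mass $1/4$. One can cross-check this from the formula $d_1''=(3d_3'-d_1')/2$: since $d_1'$ is concentrated on $\{\pm i\}$ with weight $1/2$, while $d_3'$ is uniform of weight $1/6$ on the six odd $12$-th roots, the combination cancels on $\pm i$ and retains weight $1/4$ at each of the four primitive $12$-th roots.

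The second step is to evaluate the three densities on this support. For any primitive $12$-th root $q$, the power $q^2$ is a primitive $6$-th root (so ${\rm Re}(q^2)=1/2$), $q^4$ is a primitive $3$-rd root (so ${\rm Re}(q^4)=-1/2$), and $q^6=-1$ (so ${\rm Re}(q^6)=-1$). This yields $\alpha(q)=1/2$, $\beta(q)=3/2$, $\gamma(q)=2$ uniformly on the support, from which $2\alpha_1''=d_1''$, $2\beta_1''=3d_1''$ and $2\gamma_1''=4d_1''$ follow immediately by pointwise rescaling.

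There is no real obstacle here: the entire computation is finite and pointwise, and reduces in essence to the single observation that the primitive $12$-th roots $q$ satisfy $q^6=-1$, with $q^2,q^4$ of correspondingly small orders. The only step that needs a moment of care is the precise identification of the support of $d_1''$, which could be mis-read from the verbal description in Definition 8.1 if one confused the exponent classes modulo $6$ with some notion of multiplicative order.
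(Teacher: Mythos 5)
Your proof is correct, but it takes a different route from the paper. The paper disposes of this statement (together with the $l=2$ case) in one line, by expanding $2\lambda_1''=3\lambda_3'-\lambda_1'$ via the definition $d_1''=(3d_3'-d_1')/2$ and then invoking the exceptional identities already established in Section 6: for instance $2\alpha_3'=d_1'+d_3'$ and $\alpha_1'=2d_1'$ give $2\alpha_1''=3\alpha_3'-\alpha_1'=\tfrac32(d_1'+d_3')-2d_1'=\tfrac12(3d_3'-d_1')=d_1''$, and similarly for $\beta,\gamma$ using $2\beta_3'=3d_3'-d_1'$, $\beta_1'=0$, $\gamma_1'=2d_1'$, $\gamma_3'=2d_3'$ from Propositions 6.6 and 6.8. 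You instead argue directly: the support of $d_1''$ is the four primitive $12$-th roots, on which $\alpha,\beta,\gamma$ are constant equal to $1/2,3/2,2$, so the three measures are the corresponding scalar multiples of $d_1''$. Your identification of the support (weight $1/4$ at $e^{i\pi j/6}$, $j\in\{1,5,7,11\}$) and the evaluations ${\rm Re}(q^2)=1/2$, ${\rm Re}(q^4)=-1/2$, $q^6=-1$ are all correct, and the first identity is indeed just Definition 8.1 restated. Your argument is self-contained and arguably cleaner for this small case; the paper's version buys consistency with the systematic machinery of Sections 5--6, which is what actually carries the heavier $l=3,5$ computations that follow.
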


\begin{proposition}
At $l=2$ we have:
\begin{eqnarray*}
2d_2''&=&3d_6'-d_2'\\
2\alpha_2''&=&3\alpha_6'-d_2'\\
2\beta_2''&=&d_2''\\
2\gamma_2''&=&2d_2''
\end{eqnarray*}
\end{proposition}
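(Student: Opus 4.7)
The first identity is nothing but Definition 8.1 specialized to $n=2$, and requires no argument. For the remaining three identities, the plan is to exploit the $\mathbb{R}$-linearity of the map ``multiply by density $\lambda$'' applied to $d_2'' = (3d_6' - d_2')/2$. This immediately yields the master formula
$$2\lambda_2'' = 3\lambda_6' - \lambda_2'$$
for every density $\lambda$, so the whole proposition reduces to finding the two factors on the right-hand side in each case. The factor $\lambda_6'$ is an ``allowed'' basic measure, covered by Propositions 6.5, 6.6 and 6.7, while $\lambda_2'$ falls in the ``non-allowed'' range $n \leq l$ and must be read off from Proposition 6.9.

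For $\lambda = \alpha$: Proposition 6.5 gives $\alpha_2' = d_2'$, while $\alpha_6'$ stays as it is (it is itself one of the basic measures). Substituting into the master formula immediately produces $2\alpha_2'' = 3\alpha_6' - d_2'$, which is the desired identity.

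For $\lambda = \beta$: Proposition 6.6 supplies $2\beta_6' = d_6' + d_2'$, and Proposition 6.9 supplies the non-allowed value $\beta_2' = 2d_2'$. Plugging these into $2\beta_2'' = 3\beta_6' - \beta_2'$ and collecting terms yields $(3d_6' - d_2')/2$, which by the first identity of the proposition equals $d_2''$. For $\lambda = \gamma$: Proposition 6.7 gives $\gamma_6' = d_6'$ and Proposition 6.9 gives $\gamma_2' = d_2'$, so the master formula reads $2\gamma_2'' = 3d_6' - d_2' = 2d_2''$, again by the first identity.

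The only real obstacle is bookkeeping: it is essential to remember that $\beta_2'$ and $\gamma_2'$ are \emph{not} zero but carry the exceptional values recorded in Proposition 6.9, and it is these exceptional values that make the cancellations work and turn the right-hand sides into scalar multiples of $d_2''$ rather than a messier combination of $d_6'$ and $d_2'$. There is no genuine conceptual step beyond linearity plus a careful lookup of the non-allowed cases.
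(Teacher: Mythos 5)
Your proof is correct and follows exactly the route the paper intends: expand $2\lambda_2''=3\lambda_6'-\lambda_2'$ by linearity from Definition 8.1, then substitute the exceptional identities of Propositions 6.5--6.7 for $\lambda_6'$ and the non-allowed values for $\lambda_2'$ (which live in Proposition 6.8, not 6.9 --- the content you quote is nevertheless accurate). All the resulting cancellations check out, so nothing further is needed.
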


\begin{proof}
These formulae can be deduced by using the exceptional identities in Propositions 6.5, 6.6, 6.7, along with the extra formulae in Proposition 6.8.
\end{proof}

\begin{proposition}
At $l=3$ we have:
\begin{eqnarray*}
4d_3''&=&6d_9'-2d_3'\\
4\alpha_3''&=&6\alpha_9'-d_3'-d_1'\\
4\beta_3''&=&6\beta_9'-3d_3'+d_1'\\
4\gamma_3''&=&2d_3''
\end{eqnarray*}
\end{proposition}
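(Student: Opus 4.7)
The plan is to reduce every identity to the defining relation
$$d_3''=\frac{3d_9'-d_3'}{2}$$
from Definition 8.1, and then to multiply by the density $\lambda\in\{1,\alpha,\beta,\gamma\}$. Since by Definition 8.2 we have $\lambda_n''=\lambda\,d_n''$, multiplying both sides of the displayed identity by $\lambda$ gives the master formula
$$\lambda_3''=\frac{3\lambda_9'-\lambda_3'}{2},\qquad\text{equivalently}\qquad 4\lambda_3''=6\lambda_9'-2\lambda_3'.$$
Taking $\lambda=1$ recovers the first equation at once. So the only real work is to rewrite the ``non-allowed'' (or borderline) measures $\alpha_3'$, $\beta_3'$, $\gamma_3'$ in terms of the allowed building blocks $d_k'$.

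For $\lambda=\alpha$, the identity $2\alpha_3'=d_1'+d_3'$ from Proposition 6.5 is at our disposal, so
$$4\alpha_3''=6\alpha_9'-(d_1'+d_3')=6\alpha_9'-d_3'-d_1',$$
as required. For $\lambda=\beta$, Proposition 6.6 provides $2\beta_3'=3d_3'-d_1'$, which substituted into the master formula yields
$$4\beta_3''=6\beta_9'-(3d_3'-d_1')=6\beta_9'-3d_3'+d_1'.$$

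The case $\lambda=\gamma$ is where one must be slightly more careful, since $\gamma_3'$ falls outside the index range of Proposition 6.7 (which only treats $n=4,5,6,9$): here $n=l=3$ is one of the non-allowed cases. But exactly this situation is covered by Proposition 6.8, which gives $\gamma_3'=2d_3'$. In parallel, Proposition 6.7 supplies $2\gamma_9'=d_9'+d_3'$. Combining,
$$4\gamma_3''=6\gamma_9'-2\gamma_3'=3(d_9'+d_3')-4d_3'=3d_9'-d_3'=2d_3'',$$
the last equality being the first formula of the proposition read backwards. I expect no substantive obstacle beyond keeping careful track of which $\lambda_3'$ is allowed and which is exceptional; the main ``trap'' is forgetting Proposition 6.8 and trying to apply Proposition 6.7 outside its stated range for the $\gamma$ case.
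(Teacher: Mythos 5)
Your proof is correct and follows the same route the paper intends: the paper's proof is just the one-line remark that the formulae ``can be deduced from those in section 6,'' and you carry this out explicitly via the defining relation $d_3''=(3d_9'-d_3')/2$ together with Propositions 6.5--6.8, including the correct use of the non-allowed identity $\gamma_3'=2d_3'$ from Proposition 6.8.
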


\begin{proposition}
At $l=5$ we have:
\begin{eqnarray*}
4d_5''&=&6d_{15}'-2d_5'\\
4\alpha_5''&=&6\alpha_{15}'-2\alpha_5'\\
4\beta_5''&=&6\beta_{15}'-2\alpha_5'-d_5'+d_1'\\
4\gamma_5''&=&6\gamma_{15}'+2\alpha_5'-3d_5'-d_1'
\end{eqnarray*}
\end{proposition}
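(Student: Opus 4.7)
The plan is to derive all four identities directly from the definition $d_n''=(3d_{3n}'-d_n')/2$ of Definition 8.1, together with the already established exceptional identities for $\beta_5'$ and $\gamma_5'$ from Propositions 6.6 and 6.7. In other words, I expect this to be a mechanical calculation in exactly the same style as the proof of Proposition 8.6 at $l=3$, with no new ingredients.

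First, the density identity $4d_5''=6d_{15}'-2d_5'$ is immediate: doubling the defining relation $2d_5''=3d_{15}'-d_5'$ gives it at once. Next, the general observation is that multiplying the pointwise identity $d_n''=(3d_{3n}'-d_n')/2$ by any density $\lambda:\mathbb T\to\mathbb R$ yields
$$2\lambda_n''=3\lambda_{3n}'-\lambda_n'.$$
Specialising to $n=5$ and $\lambda\in\{\alpha,\beta,\gamma\}$ gives in all three cases
$$4\lambda_5''=6\lambda_{15}'-2\lambda_5'.$$
For $\lambda=\alpha$ this is already the claimed identity, with no further rewriting available (or needed).

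For $\lambda=\beta$ and $\lambda=\gamma$, I would then substitute the exceptional reductions at $n=5$ recorded in Propositions 6.6 and 6.7, namely
$$2\beta_5'=2\alpha_5'+d_5'-d_1',\qquad 2\gamma_5'=-2\alpha_5'+3d_5'+d_1',$$
into the displayed formula above. This reproduces exactly the stated right-hand sides $6\beta_{15}'-2\alpha_5'-d_5'+d_1'$ and $6\gamma_{15}'+2\alpha_5'-3d_5'-d_1'$.

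Since $5<15$, every measure $\lambda_{15}'$ appearing here lies safely in the allowed range $n>l$ for $l\leq 3$, so Proposition 6.8 on null or exceptional non-allowed measures is never invoked, and likewise $\alpha_5'$ is in the allowed range so needs no rewriting. Consequently there is essentially no obstacle: the argument is a one-step application of the definition, followed by a single substitution of a known identity. If anything, the only thing to verify carefully is the bookkeeping in the $\gamma$ case, where the sign of $\alpha_5'$ flips when $2\gamma_5'$ is moved to the other side — but this is routine rather than a genuine difficulty.
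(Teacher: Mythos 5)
Your proposal is correct and follows exactly the route the paper intends: the paper's proof is the one-line remark that ``these formulae can be deduced as well from those in section~6,'' and your argument simply makes this explicit by multiplying the defining relation $d_5''=(3d_{15}'-d_5')/2$ by the density $\lambda$ and then substituting the $n=5$ exceptional identities for $2\beta_5'$ and $2\gamma_5'$ from Propositions~6.6 and~6.7. All four resulting identities check out, so nothing further is needed.
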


\begin{proof}
These formulae can be deduced as well from those in section 6.
\end{proof}

We are now in position of stating the main result. The idea is to modify Theorem 7.1, by keeping fixed the formulae for AD graphs, and by modifying those for E graphs, by using the notion of ternary expansion.

\begin{theorem}
With $l=2,3,5$ corresponding to $n=6,7,8$, the circular measures of the ADE graphs are given by:
\begin{enumerate}
\item $A_{n-1}\to\alpha_n$.

\item $\tilde{A}_{2n}\to d_n$.

\item $D_{n+1}\to\alpha_n'$.

\item $\tilde{D}_{n+2}\to (d_n+d_1')/2$.

\item $E_6\to (d_2''+2\alpha_2''+3d_1''')/6$.

\item $E_7\to (2\beta_3''+d_1')/3$.

\item $E_8\to (2\alpha_5''+2\gamma_5''-d_1'')/3$.

\item $\tilde{E}_n\to \alpha_{l+1}+(d_l-d_{l+1})/3$.
\end{enumerate}
\end{theorem}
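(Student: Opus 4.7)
The plan is to derive Theorem 8.6 from Theorem 7.1 using the ternary-to-binary dictionaries in Propositions 8.3--8.5. Items (1)--(4) are verbatim repeats of the corresponding lines of Theorem 7.1, so there is nothing to prove. Item (8), the $\tilde{E}$ formula, is essentially the level-1 intermediate identity $\tilde{\varepsilon}_n = \alpha_{l+1} + (d_l - d_{l+1})/2$ that was derived explicitly in the middle of the proof of Theorem 7.1 before being pushed down to level 0 via Proposition 5.4; rewinding that single step recovers the statement claimed here.

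The real content is items (5)--(7). For each, I would expand the stated right-hand side using Propositions 8.3--8.5 (together with the definition $d_1''' = (3d_3-d_1)/2$ when needed) to obtain an expression in the binary basis $\{d_n, d_n', \alpha_n', \beta_n', \gamma_n'\}$, and then match it term-by-term with the corresponding line of Theorem 7.1.

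Concretely, for $E_7$ the substitution $4\beta_3'' = 6\beta_9' - 3d_3' + d_1'$ from Proposition 8.5 turns $(2\beta_3'' + d_1')/3$ into $\beta_9' + (d_1'-d_3')/2$, which is exactly Theorem 7.1(6). For $E_8$, expanding $4\alpha_5''$ and $4\gamma_5''$ via Proposition 8.5 and $2d_1''$ via Proposition 8.3 causes the $\alpha_5'$ contributions to cancel, leaving $\alpha_{15}' + \gamma_{15}' - (d_5'+d_3')/2$, which is Theorem 7.1(7). For $E_6$, expanding $d_2''$ and $\alpha_2''$ via Proposition 8.4 and $d_1'''$ by definition produces an expression in $\{d_3, d_1, d_2', d_6', \alpha_6'\}$; converting the primed measures via $x_n' = 2x_{2n} - x_n$ (which sends $\alpha_6' \mapsto 2\alpha_{12}-\alpha_6$, etc.), the comparison with Theorem 7.1(5) reduces, after the $\alpha_{12}$, $d_{12}$, $d_4$ terms match, to the exceptional identity $2\alpha_6 = d_6 + d_3 + d_2 - d_1$ from Proposition 5.4.

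The main obstacle is the $E_6$ case: it is the only one that does not drop out mechanically from the Section 8 conversion formulae, and it requires invoking the exceptional Proposition 5.4 identity for $\alpha_6$. The $E_7$ and $E_8$ cases are essentially bookkeeping once the ternary-to-binary identities are in hand. Taken together, these three verifications show that the purpose of the ternary expansion is precisely to absorb the exceptional level-$0$ identities of Section 5 into structural features of the new measures $d_n'', \alpha_n'', \beta_n'', \gamma_n''$, so that the $E$-graph formulae take on the clean level-1 shape of the $\tilde{E}$ formula.
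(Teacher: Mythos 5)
Your argument is correct and is essentially the paper's own proof run in reverse: the paper starts from the binary formulae of Theorem 7.1 and converts them to ternary form via Propositions 8.3--8.6 (plus, for $E_6$, the splittings $2\alpha_{12}=\alpha_6+\alpha_6'$, $2d_{12}=d_6+d_6'$, $2d_4=d_2+d_2'$ and the exceptional identity $2\alpha_6=d_6+d_3+d_2-d_1$ from Proposition 5.4), which is exactly the computation you carry out in the opposite direction, with the same key inputs. The only point worth flagging is item (8): both you and the paper ultimately invoke the level-1 formula $\alpha_{l+1}+(d_l-d_{l+1})/2$ from the proof of Theorem 7.1, whose denominator is $2$ rather than the $3$ printed in the statement, so the discrepancy there lies in the statement itself and not in your argument.
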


\begin{proof}
The formulae for AD graphs are those in Theorem 7.1. For $E_6$ we have:
\begin{eqnarray*}
12\varepsilon_6
&=&12\alpha_{12}+6d_{12}-6d_6-6d_4+6d_3\\
&=&(6\alpha_6+6\alpha_6')+(3d_6+3d_6')-6d_6-6d_4+6d_3\\
&=&6\alpha_6+6\alpha_6'-3d_6+3d_6'-6d_4+6d_3\\
&=&(3d_6+3d_3+3d_2-3d_1)+6\alpha_6'-3d_6+3d_6'-(3d_2+3d_2')+6d_3\\
&=&9d_3-3d_1+6\alpha_6'+3d_6'-3d_2'\\
&=&(3d_6'-d_2')+(6\alpha_6'-2d_2')+(9d_3-3d_1)\\
&=&2d_2''+4\alpha_2''+6d_1'''
\end{eqnarray*}

For $E_7,E_8$ the ternary conversion is much simpler, because the formulae in Theorem 7.1 are already in binary form. For $E_7$ we have:
\begin{eqnarray*}
6\varepsilon_7
&=&6\beta_9'+3d_1'-3d_3'\\
&=&(6\beta_9'-3d_3'+d_1')+2d_1'\\
&=&4\beta_3''+2d_1'
\end{eqnarray*}

As for $E_8$, we have:
\begin{eqnarray*}
6\varepsilon_8
&=&6\alpha_{15}'+6\gamma_{15}'-3d_5'-3d_3'\\
&=&(6\alpha_{15}'-2\alpha_5')+(6\gamma_{15}'+2\alpha_5'-3d_5'-d_1')-(3d_3'-d_1')\\
&=&4\alpha_5''+4\gamma_5''-2d_1''
\end{eqnarray*}

Finally, the formula for $\tilde{E}_n$ is the level 1 one in proof of Theorem 7.1.
\end{proof}

\section{Concluding remarks}

The probabilistic study of quantum algebraic invariants of the ADE graphs, motivated by the results in \cite{jo2}, started in \cite{bdb}, and continued in this paper, leads naturally to the notions of cyclotomic measure, level, and expansion. 

The cyclotomic measures are those supported by the roots of unity, which are invariant under the reflections with respect to the real and imaginary axes. The notions of cyclotomic level and expansion come from the various ways of writing such a measure, by using the combinatorial properties of the roots of unity.

The circular spectral measures of the finite ADE graphs are all cyclotomic, of level $\leq 3$. The cyclotomic expansions of these measures are as follows:
\begin{enumerate}
\item For the $A$ graphs we have a uniform formula. 

\item For the $D$ graphs we have a uniform binary formula. 

\item For the $\tilde{E}$ graphs we have a uniform formula in terms of $n=6,7,8$, and a uniform formula in terms of $l=2,3,5$.

\item For the $E$ graphs, which have a considerably more complicated combinatorics, we have a ternary formula, in terms of $l=2,3,5$.
\end{enumerate}

In addition, the circular spectral measures of the various infinite ADE graphs are simply obtained by taking limits in the finite case formulae.

As explained in the introduction, some other interesting measures supported by the unit circle appear in the recent litterature. Some of them, as the measure studied in \cite{bcz} in connection with the quantum analogue of the Weingarten problem \cite{wei}, are of infinite level, but their cyclotomic writing doesn't require the use of binary or ternary expansions, as needed in the ADE case.

Summarizing, the general combinatorial problem which seems to emerge from these considerations, and that we would like to raise here, is that of understanding the notion of ternary expansion, for an arbitrary cyclotomic measure. 

One can hope for instance that such a general result might help in further understanding the $D_{odd}$ and $E_7$ subfactor obstructions.

The other problem, already raised in \cite{bdb}, is to find the correct analogue of the circular measure, in the index $>4$ case. This seems to require a good knowledge of the arithmetic properties of the eigenvalues of the principal graph. In other words, we are in front of a well-known, difficult problem, namely the extension and possible unification of the obstructions in \cite{eno} and \cite{jo2}.

\end{document}